\documentclass[10pt,draft,reqno]{amsart}

% --------------------------------------------------------------------------

\makeatletter
\def\section{\@startsection{section}{1}%
\z@{.7\linespacing\@plus\linespacing}{.5\linespacing}%
{\bfseries
\centering
}}
\def\@secnumfont{\bfseries}
\makeatother

% --------------------------------------------------------------------------

\setlength{\textheight}{19.5 cm}
\setlength{\textwidth}{12.5 cm}
\newtheorem{theorem}{Theorem}[section]
\newtheorem{lemma}[theorem]{Lemma}
\newtheorem{proposition}[theorem]{Proposition}

\theoremstyle{definition}
\newtheorem{definition}[theorem]{Definition}
\newtheorem{example}[theorem]{Example}
\theoremstyle{remark}
\newtheorem{remark}[theorem]{Remark}
\numberwithin{equation}{section}
\setcounter{page}{1}

% --------------------------------------------------------------------------

\AtBeginDocument{{\noindent\small
This is a preprint of a paper whose final and definite form is with 
\emph{Global and Stochastic Analysis} (GSA), ISSN 2248-9444, 
available {\tt http://www.mukpublications.com/gsa.php}. 
Submitted 27-Nov-2016; Revised 04-March and 03-April, 2017; Accepted 23-May-2017.}
\vspace{9mm}}

% --------------------------------------------------------------------------

\begin{document}

\title[A Truly Conformable Calculus on Time Scales]{A Truly Conformable Calculus on Time Scales}

\author[B. Bayour]{Benaoumeur Bayour}
\address{Benaoumeur Bayour: 
University of Mascara, B. P. 305, Mamounia Mascara,\newline Mascara 29000, Algeria}
\email{b.benaoumeur@gmail.com}

\author[A. Hammoudi]{Ahmed Hammoudi}
\address{Ahmed Hammoudi:
Laboratoire de Math\'{e}matiques, 
Universit\'{e} de Ain T\'{e}mouchentl,
B. P. 89, 46000 Ain T\'{e}mouchent, Algeria}
\email{hymmed@hotmail.com}

\author[D. F. M. Torres]{Delfim F. M. Torres$^*$}
\thanks{$^*$Corresponding author.}
\address{Delfim F. M. Torres:
Center for Research and Development in Mathematics and Applications (CIDMA),
Department of Mathematics, University of Aveiro, 3810-193 Aveiro, Portugal}
\email{delfim@ua.pt}

% --------------------------------------------------------------------------

\subjclass[2000]{Primary 26A24; Secondary 34A05; 34A12}

\keywords{Calculus on time scales; conformable operators; dynamic equations.}

% --------------------------------------------------------------------------

\begin{abstract}
We introduce the definition of conformable derivative
on time scales and develop its calculus. Fundamental
properties of the conformable derivative and integral
on time scales are proved. Linear conformable differential 
equations with constant coefficients are investigated,
as well as hyperbolic and trigonometric functions.
\end{abstract}

\maketitle

% --------------------------------------------------------------------------

\section{Introduction}

Local, limit-based, definitions of a so-called
conformable derivative on time scales have been
recently formulated in \cite{[Ben.T]} by
\begin{equation}
\label{eq:[Ben.T]}
T_{\alpha}(f)(t)=\frac{f(\sigma(t))-f(t)}{\sigma(t) - t} \, t^{1-\alpha},
\quad \alpha\in (0,1],
\end{equation}
and then in \cite{[F.X]} by
\begin{equation}
\label{eq:[F.X]}
T_{\alpha}(f)(t)
=\frac{f(\sigma(t))-f(t)}{\sigma(t)^{\alpha}-t^{\alpha}},
\quad \alpha\in (0,1].
\end{equation}
Note that if $f$ is $\Delta$-diferentiable at a
right-scattered point  $t\in \mathbb{T}^{\kappa}_{[0,\infty)}$
\cite{[B.P.1]}, then $f$ is $\alpha$-differentiable in both cases:
for the first definition \eqref{eq:[Ben.T]} we have
\begin{equation}
\label{eq3}
T_{\alpha}(f)(t)=t^{1-\alpha}f^{\Delta}(t)
\end{equation}
while for the second definition \eqref{eq:[F.X]} one has
\begin{equation}
\label{eq03}
T_{\alpha}(f^{\Delta})(t)
=\frac{\sigma(t)-t}{\sigma^{\alpha}(t)-t^{\alpha}} f^{\Delta}(t),
\end{equation}
where $f^{\Delta}(t)=\frac{f^{\sigma}(t)-f(t)}{\sigma(t)-t}$.
The conformable calculus in the time scale $\mathbb{T} = \mathbb{R}$ is now
a well-developed subject: see, e.g., \cite{[T.A],[A.H],MyID:351}
and references therein. For results on arbitrary time scales
see \cite{MyID:330,MR3557866,MR3614829}. However, the adjective \emph{conformable}
may not be appropriate, because $T_{0}f\neq f$, that is, letting $\alpha\rightarrow 0$
does not result in the identity operator. This is also the case
for the recent results of \cite{Zhao:Li}. Moreover, according to \eqref{eq3}
and \eqref{eq03}, the variable $t$ must satisfy $t\geq 0$. With this in mind,
in this paper we extend the calculus of \cite{[D.D]}, by considering a truly
conformable derivative of order $\alpha$, $0\leq\alpha\leq 1$,
on an arbitrary time scale $\mathbb{T}$.

% -----------------------------------------

\section{Preliminaries}

We briefly recall the necessary concepts from
the time-scale calculus \cite{[B.P.1],[B.P.2]}.
A time scale $\mathbb{T}$ is an arbitrary nonempty closed
subset of the real numbers $\mathbb{R}$. For $t\in\mathbb{T}$,
the forward jump operator $\sigma:\mathbb{T}\rightarrow\mathbb{T}$
is defined by 
$$
\sigma(t)=\inf\{s\in\mathbb{T}:s>t\}
$$
and the backward jump operator $ \rho:\mathbb{T}\rightarrow\mathbb{T}$ by 
$$
\rho(t):=\sup\{s\in\mathbb{T}:s<t\}. 
$$
The graininess function
$\mu:\mathbb{T}\rightarrow[0,+\infty[$ is given by 
$$
\mu(t)= \sigma(t)-t.
$$
If $\sigma(t)>t$, then $t$ is said right-scattered,
while if $\rho(t)<t$, then $t$ is left-scattered. Moreover,
if $t < \sup\mathbb{T}$  and $\sigma(t)=t$, then $t$ is called right-dense;
if $t>\inf\mathbb{T} $ and $\rho(t)=t$, then $t$ is called left-dense.
If $\mathbb{T} $ has a left-scattered maximum $m$, then
$\mathbb{T}^{\kappa}=\mathbb{T}\setminus\{ m\}$; otherwise,
$\mathbb{T}^{\kappa}=\mathbb{T}$. If $f:\mathbb{T}\rightarrow \mathbb{R}$,
then function $f^{\sigma}:\mathbb{T}\rightarrow \mathbb{R}$ is defined
by $f^{\sigma}=f \circ \sigma$.

\begin{definition}
\label{def:rd:cont}
A function $f: \mathbb{T}\rightarrow \mathbb{R}$ is rd-continuous
provided it is continuous at right-dense points in $\mathbb{T}$ and its
left-sided limits exist (finite) at left-dense points in $\mathbb{T}$.
A function $k:[0,1]\times \mathbb{T}\rightarrow [0,\infty)$
is rd-continuous if $k(\alpha,\cdot): \mathbb{T}\rightarrow [0,\infty)$
is rd-continuous for all $\alpha\in[0,1]$ and $k(\cdot,t):
[0,1]\rightarrow [0,\infty)$ is continuous for all $t\in\mathbb{T}$.
\end{definition}

The set of rd-continuous functions  $f:\mathbb{T}\rightarrow\mathbb{R}$ is
denoted by $C_{rd}$. We say that a function $p:\mathbb{T}\rightarrow\mathbb{R}$
is regressive provided $1+\mu(t)p(t)\neq 0$ holds for all $t \in \mathbb{T}^{\kappa}$.
The set of all regressive and rd-continuous functions
$f:\mathbb{T}\rightarrow\mathbb{R}$ is denoted by $\mathcal{R}$.

The next definition serves as the basis to our notion
of conformable differential operator in Section~\ref{sec:mr}.

\begin{definition}[See \cite{[B.P.1],[B.P.2]}]
\label{def:der:ts}
Assume $f:\mathbb{T}\rightarrow \mathbb{R}$ and let $t\in\mathbb{T}^\kappa$.
We define $f^{\Delta}(t)$ to be the number (provided it exists)
with the property that given any $\epsilon>0$, there is a neighborhood $U$ of $t$
(i.e., $U=(t-\delta,t+\delta)\cap\mathbb{T}$ for some $\delta>0$) such that
$$
\left|[f(\sigma(t))-f(s)]-f^{\Delta}(t)[\sigma(t)-s]\right|
\leq\epsilon\left|\sigma(t)- s\right|
$$
for all $s\in U$. We call $f^{\Delta}(t)$ the delta derivative of $f$ at $t$.
\end{definition}

% --------------------------------------------------------------------------

\section{Main results}
\label{sec:mr}

We begin by introducing the notion
of conformable differential operator
of order $\alpha\in[0,1]$ on an
arbitrary time scale $\mathbb{T}$.

\begin{definition}[Conformable delta differential operator of order $\alpha$]
\label{def:conf:do}
Let $\mathbb{T}$ be a time scale and let $\alpha\in[0,1]$.
An operator $\Delta^{\alpha}$ is conformable if and only if
$\Delta^{0}$ is the identity operator and $\Delta^{1}$
is the standard differential operator on $\mathbb{T}$. Precisely, operator
$\Delta^{\alpha}$ is conformable if and only if for a differentiable function $f$
in the sense of Definition~\ref{def:der:ts}, one has
$\Delta^{0} f=f$ and $\Delta^{1}f=f^{\Delta}$.
\end{definition}

Proposition~\ref{df1} gives an extension of \cite{[D.D]}
to time scales $\mathbb{T}$: for $\mathbb{T} = \mathbb{R}$,
\eqref{eq1} subject to \eqref{eq2} gives
Definition~1.3 of \cite{[D.D]}.

\begin{proposition}[A conformable derivative $\Delta^{\alpha}$ on time scales]
\label{df1}
Let $\mathbb{T}$ be a time scale, $\alpha\in[0,1]$, and $\kappa_{0},\kappa_{1}
: [0,1]\times \mathbb{T}\rightarrow [0,\infty)$ be rd-continuous functions
(see Definition~\ref{def:rd:cont}) such that
\begin{equation}
\label{eq2}
\begin{gathered}
\lim_{\alpha\rightarrow 0^{+}}\kappa_{1}(\alpha,t) = 1,
\quad \lim_{\alpha\rightarrow 0^{+}}\kappa_{0}(\alpha,t) =0,\\
\lim_{\alpha\rightarrow 1^{-}}\kappa_{1}(\alpha,t) = 0,
\quad \lim_{\alpha\rightarrow 1^{-}}\kappa_{0}(\alpha,t) =1,\\
\kappa_{1}(\alpha,t)\neq0, \quad \kappa_{0}(\alpha,t)\neq 0,
\quad \alpha\in (0,1],
\end{gathered}
\end{equation}
for all $t\in \mathbb{T}$. Then, the differential operator $\Delta^{\alpha}f$,
defined by
\begin{equation}
\label{eq1}
\Delta^{\alpha}f(t) = \kappa_{1}(\alpha,t)f(t) +\kappa_{0}(\alpha,t)f^{\Delta}(t)
\end{equation}
in the class of $\Delta$-differentiable functions $f$,
is conformable in the sense of Definition~\ref{def:conf:do}.
\end{proposition}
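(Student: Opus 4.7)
The plan is to unfold Definition~\ref{def:conf:do} and verify, one endpoint at a time, that the operator given by \eqref{eq1} satisfies the two normalization requirements imposed on a conformable differential operator. Because $\Delta^{\alpha}f$ is by construction a pointwise linear combination of $f$ and $f^{\Delta}$ with scalar coefficients depending only on $\alpha$ and $t$, the whole argument reduces to evaluating those coefficients at $\alpha=0$ and $\alpha=1$, and no genuine properties of $f$ beyond $\Delta$-differentiability are needed.

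First I would specialize \eqref{eq1} to $\alpha=0$. By Definition~\ref{def:rd:cont}, for each fixed $t\in\mathbb{T}$ the maps $\alpha\mapsto \kappa_{0}(\alpha,t)$ and $\alpha\mapsto\kappa_{1}(\alpha,t)$ are continuous on $[0,1]$, so the one-sided limits imposed in the first line of \eqref{eq2} actually coincide with the endpoint values $\kappa_{1}(0,t)=1$ and $\kappa_{0}(0,t)=0$. Substituting into \eqref{eq1} gives $\Delta^{0}f(t)=1\cdot f(t)+0\cdot f^{\Delta}(t)=f(t)$, so $\Delta^{0}$ is the identity operator on the class of $\Delta$-differentiable functions.

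Next I would repeat the argument at $\alpha=1$: continuity together with the second line of \eqref{eq2} yields $\kappa_{1}(1,t)=0$ and $\kappa_{0}(1,t)=1$, whereupon \eqref{eq1} collapses to $\Delta^{1}f(t)=0\cdot f(t)+1\cdot f^{\Delta}(t)=f^{\Delta}(t)$, which is precisely the standard delta derivative in the sense of Definition~\ref{def:der:ts}. These two computations are exactly what Definition~\ref{def:conf:do} asks of a conformable operator, so the proposition will follow.

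I do not anticipate any real obstacle: the proof is essentially a substitution. The only mildly delicate point is bridging the \emph{one-sided} limits stated in \eqref{eq2} with the \emph{endpoint values} needed at $\alpha=0$ and $\alpha=1$, and this is handled by the continuity clause built into Definition~\ref{def:rd:cont}. I would note in passing that the non-vanishing condition in the third line of \eqref{eq2} is not invoked here; it is kept in reserve for later developments of the calculus, where it is needed to rearrange \eqref{eq1} into an expression for $f^{\Delta}$ in terms of $\Delta^{\alpha}f$ and hence to address inversion and conformable dynamic equations.
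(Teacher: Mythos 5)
Your proof is correct and follows essentially the same route as the paper, which simply declares the result a trivial consequence of \eqref{eq2}--\eqref{eq1} by reading off $\Delta^{0}f=f$ and $\Delta^{1}f=f^{\Delta}$. Your extra care in using the continuity of $\alpha\mapsto\kappa_{i}(\alpha,t)$ from Definition~\ref{def:rd:cont} to upgrade the one-sided limits in \eqref{eq2} to the endpoint values $\kappa_{1}(0,t)=1$, $\kappa_{0}(0,t)=0$, $\kappa_{1}(1,t)=0$, $\kappa_{0}(1,t)=1$ is a worthwhile detail the paper leaves implicit, but it does not change the substance of the argument.
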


\begin{proof}
The result is a trivial consequence of \eqref{eq2}--\eqref{eq1}:
$\Delta^{0}f = f$ and $\Delta^{1}f = f^\Delta$.
\end{proof}

\begin{remark}
The parameter $\alpha$ has a crucial role.
Indeed, $\alpha$ is the order of the operator.
Note that given a concrete conformable operator, $\alpha$
is a given fixed constant between zero and one. For example,
if $\alpha = 1$, then we get the Hilger derivative \cite{[B.P.1]}.
\end{remark}

\begin{remark}
Let $\alpha\in(0,1]$, $\kappa_{1}(\alpha,t)\equiv 0$,
and $\kappa_{0}(\alpha,t)=t^{1-\alpha}$.
Then, formally, we recover \eqref{eq3} from \eqref{eq1}.
However, such choice of $\kappa_{0}$ and $\kappa_{1}$
is not allowed by \eqref{eq2}
because \eqref{eq3} is not conformable in agreement
with Definition~\ref{def:conf:do}.
\end{remark}

Many examples of conformable derivatives
on time scales are easily obtained
from Proposition~\ref{df1}.

\begin{example}
\label{ex:cd:1}
One can take $\kappa_{1}\equiv (1-\alpha)\omega^{\alpha}$
and $\kappa_{0}\equiv \alpha\omega^{1-\alpha}$
for $\omega\in(0,\infty)$ or $\kappa_{1}(\alpha,t) = (1-\alpha)|t|^{\alpha}$
and $\kappa_{0}(\alpha,t) = \alpha|t|^{1-\alpha}$ on $\mathbb{T}\setminus \{0\}$
in Proposition~\ref{df1}. In this last case,
 \begin{equation*}
 \Delta^{\alpha}f(t)= (1-\alpha)|t|^{\alpha}f(t)+\alpha|t|^{1-\alpha}f^{\Delta}(t).
 \end{equation*}
\end{example}

\begin{example}
\label{ex:cd:2}
 Similarly to Example~\ref{ex:cd:1},
 \begin{equation*}
 \Delta^{\alpha}f(t)= \cos\left(\frac{\alpha \pi}{2}\right)|t|^{\alpha}f(t)
 +\sin\left(\frac{\alpha\pi}{2}\right) |t|^{1-\alpha}f^{\Delta}(t)
 \end{equation*}
is a conformable derivative.
\end{example}

\begin{remark}
\label{Remark:3.6}
Let $\alpha,\beta\in[0,1]$. Note that, in general,
$\Delta^{\alpha}\Delta^{\beta}\neq\Delta^{\beta}\Delta^{\alpha}$.
Indeed, let functions $\kappa_{i}$, $i=0,1$, be $\Delta_{t}$-differentiable
and continuous with respect to $\alpha$ and $f$ be twice $\Delta$-differentiable.
We have $\Delta^{\alpha}f(t)= \kappa_{1}(\alpha,t)f(t)+\kappa_{0}(\alpha,t)f^{\Delta}(t)$
and $\Delta^{\beta}f(t)= \kappa_{1}(\beta,t)f(t)+\kappa_{0}(\beta,t)f^{\Delta}(t)$.
Therefore,
\begin{equation*}
\begin{split}
\Delta^{\beta}\Delta^{\alpha}f(t)
&=\kappa_{1}(\beta,t)\left( \kappa_{1}(\alpha,t)f(t)
+\kappa_{0}(\alpha,t)f^{\Delta}(t) \right)\\
& \quad +\kappa_{0}(\beta,t)\left( \kappa_{1}(\alpha,t)f(t)
+\kappa_{0}(\alpha,t)f^{\Delta}(t) \right)^{\Delta}\\
&=\kappa_{1}(\beta,t)\kappa_{1}(\alpha,t)f(t)
+\kappa_{1}(\beta,t)\kappa_{0}(\alpha,t)f^{\Delta}(t)\\
&\quad +\kappa_{0}(\beta,t)\big[\kappa_{1}^{\Delta}(\alpha,t)
f^{\sigma}(t)+\kappa_{1}(\alpha,t)f^{\Delta}(t)\\
&\quad +\kappa_{0}^{\Delta}(\alpha,t)f^{\Delta^{\sigma}}(t)
+\kappa_{0}(\alpha,t)f^{\Delta^{2}}(t)\big] \\
&=\kappa_{1}(\beta,t)\kappa_{1}(\alpha,t)f(t)
+\kappa_{1}(\beta,t)\kappa_{0}(\alpha,t)f^{\Delta}(t)\\
&\quad +\kappa_{0}(\beta,t)\kappa_{1}^{\Delta}(\alpha,t)f^{\sigma}(t)
+\kappa_{0}(\beta,t)\kappa_{1}(\alpha,t)f^{\Delta}(t)\\
&\quad +\kappa_{0}(\beta,t)\kappa_{0}^{\Delta}(\alpha,t)f^{\Delta^{\sigma}}(t)
+\kappa_{0}(\beta,t)\kappa_{0}(\alpha,t)f^{\Delta^{2}}(t).
\end{split}
\end{equation*}
Similar calculations lead us to
\begin{equation*}
\begin{split}
\Delta^{\alpha}\Delta^{\beta}f(t)
&=\kappa_{1}(\alpha,t)\kappa_{1}(\beta,t)f(t)
+\kappa_{1}(\alpha,t)\kappa_{0}(\beta,t)f^{\Delta}(t)\\
&\quad +\kappa_{0}(\alpha,t)\kappa_{1}^{\Delta}(\beta,t)f^{\sigma}(t)
+\kappa_{0}(\alpha,t)\kappa_{1}(\beta,t)f^{\Delta}(t)\\
&\quad +\kappa_{0}(\alpha,t)\kappa_{0}^{\Delta}(\beta,t)f^{\Delta^{\sigma}}(t)
+\kappa_{0}(\alpha,t)\kappa_{0}(\beta,t)f^{\Delta^{2}}(t).
\end{split}
\end{equation*}
If $\kappa_{0}(\alpha,t)\kappa_{1}^{\Delta}(\beta,t)
\neq \kappa_{0}(\beta,t)\kappa_{1}^{\Delta}(\alpha,t)$
or $\kappa_{0}(\beta,t)\kappa_{0}^{\Delta}(\alpha,t)
\neq\kappa_{0}(\alpha,t)\kappa_{0}^{\Delta}(\beta,t)$, then
$\Delta^{\beta}\Delta^{\alpha} f(t)
\neq \Delta^{\alpha}\Delta^{\beta} f(t)$.
\end{remark}

\begin{example}
Let $\mathbb{T}$ be a time scale, $\kappa_{1}\equiv (1-\alpha)\omega^{\alpha}$
and $\kappa_{0}\equiv \alpha\omega^{1-\alpha}$, where $\omega > 0$,
$\alpha=\frac{1}{2}$ and $\beta=1$. If $t \in \mathbb{T}$, $t \geq 0$,
and $\sigma(t)\neq t$, then
\begin{equation*}
\Delta\Delta^{\frac{1}{2}}f(t)
=\frac{1}{2}\left[ \frac{\sigma^{\frac{1}{2}}(t)
-t^{\frac{1}{2}}}{\mu(t)}\left(f^{\sigma}(t)+f^{\Delta^{\sigma}}(t) \right)
+t^{\frac{1}{2}}\left( f^{\Delta}(t)+f^{\Delta^{2}}(t)\right)\right].
\end{equation*}
On the other hand, we have
\begin{equation*}
\Delta^{\frac{1}{2}}\Delta f(t)
=\frac{1}{2}t^{\frac{1}{2}}\left( f^{\Delta}(t)+f^{\Delta^{2}}(t)\right).
\end{equation*}
In this case, $\Delta\Delta^{\frac{1}{2}}f(t)\neq\Delta^{\frac{1}{2}}\Delta f(t)$.
\end{example}

\begin{definition}[Conformable exponential function on time scales]
Let $\alpha\in(0,1]$, $s,t\in \mathbb{T}$ with $s\leq t$ and let
function $p:\mathbb{T}\rightarrow \mathbb{R}$ be rd-continuous.
Let $\kappa_{0},\kappa_{1}: [0,1]\times\mathbb{T}\rightarrow [0,\infty)$
be rd-continuous and satisfy \eqref{eq2} with
$1+\mu(t)\frac{p(t)-\kappa_{1}(\alpha,t)}{\kappa_{0}(\alpha,t)}\neq0$
for all $t\in\mathbb{T}^{\kappa}$. Then, the conformable exponential
function on the time scale $\mathbb{T}$ with respect to $\Delta^{\alpha}$
in \eqref{eq1} is defined to be
\begin{equation}
\label{eq4}
E_{p}(t,s)=e_{\frac{p(t)-\kappa_{1}(\alpha,t)}{\kappa_{0}(\alpha,t)}}(t,s),
\quad E_{0}(t,s)=e_{\frac{-\kappa_{1}(\alpha,t)}{\kappa_{0}(\alpha,t)}}(t,s),
\end{equation}
where $e_{q(t)}(t,s)$ denotes the exponential function on the time scale $\mathbb{T}$
 -- see definition (2.30) in \cite{[B.P.1]}.
\end{definition}

Note that if $\mathbb{T}= \mathbb{R}$, then 
$$
E_{p}(t,s)=e^{\displaystyle \int_{s}^{t}\frac{p(\tau)
-\kappa_{1}(\alpha,\tau)}{\kappa(\alpha,\tau)}d\tau}
\quad \text{ and } \quad
E_{0}(t,s)=e^{-\displaystyle  
\int_{s}^{t}\frac{\kappa_{1}(\alpha,\tau)}{\kappa(\alpha,\tau)}d\tau}.
$$

% -------------------------------

\subsection{Fundamental properties of the conformable operators}

Using \eqref{eq1} and \eqref{eq4}, we begin by proving
several basic but important results.

\begin{theorem}[Basic properties of conformable derivatives]
\label{lem1}
Let the conformable differential operator $\Delta^{\alpha}$
be given as in \eqref{eq1}, where $\alpha\in[0,1]$.  Let
$\kappa_{0},\kappa_{1}: [0,1]\times\mathbb{T}\rightarrow [0,\infty)$
be rd-continuous and satisfy \eqref{eq2} with
$1+\mu(t)\frac{p(t)-\kappa_{1}(\alpha,t)}{\kappa_{0}}\neq 0$
for all $t\in\mathbb{T}^{\kappa}$. Assume functions $f$ and $g$
are differentiable, as needed. Then,
\begin{enumerate}
\item[(i)] $\Delta^{\alpha}(af+bg)
=a\Delta^{\alpha}(f)+b\Delta^{\alpha}(g)$
for all $a,b\in\mathbb{R}$;

\item[(ii)] $\Delta^{\alpha}c=c\kappa_{1}(\alpha,\cdot)$
for all constants $c\in\mathbb{R}$;

\item[(iii)] $\Delta^{\alpha}(fg)
=f\Delta^{\alpha}(g)+g^{\sigma}\Delta^{\alpha}(f)
-fg^{\sigma}\kappa_{1}(\alpha,\cdot)$;

\item[(iv)] $\Delta^{\alpha}\left(\frac{f}{g}\right)
=\frac{g\Delta^{\alpha}(f)-f\Delta^{\alpha}(g)}{gg^{\sigma}}
+ \frac{f}{g}\kappa_{1}(\alpha,\cdot)$;

\item[(v)] $\Delta^{\alpha}E_{p}(t,s)=p(t)E_{p}(t,s)$ for all $\alpha\in(0,1]$;

\item[(vi)] $\Delta^{\alpha}\left(\int_{a}^{t}
\frac{f(s)E_{0}(t,s)}{\kappa_{0}(\alpha,s)}\Delta s\right)=f(t)E_{0}(\sigma(t),t)$
for all $\alpha\in(0,1]$.
\end{enumerate}
\end{theorem}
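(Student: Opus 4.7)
The plan is to verify each of (i)--(vi) by direct computation from the defining formula $\Delta^{\alpha}f = \kappa_{1}(\alpha,t)f + \kappa_{0}(\alpha,t)f^{\Delta}$ of \eqref{eq1}, reducing each identity to a known time-scale rule for $f^{\Delta}$.

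Parts (i) and (ii) are essentially immediate: linearity of $\Delta^{\alpha}$ follows from the corresponding linearity of the delta derivative together with the affine structure of \eqref{eq1}; and since $c^{\Delta}=0$ for a constant $c$, the $\kappa_{0}$ term vanishes, leaving $\Delta^{\alpha}c = c\kappa_{1}(\alpha,\cdot)$. For (iii) and (iv) I would invoke the time-scale product rule $(fg)^{\Delta} = fg^{\Delta}+f^{\Delta}g^{\sigma}$ and quotient rule $(f/g)^{\Delta}=(f^{\Delta}g-fg^{\Delta})/(gg^{\sigma})$, substitute these into \eqref{eq1}, and then regroup the resulting terms so that each factor carries its own $\kappa_{1}f+\kappa_{0}f^{\Delta}$. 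This regrouping inevitably introduces one extra copy of a $\kappa_{1}$-term (precisely $\kappa_{1}fg^{\sigma}$ in the product case and $\kappa_{1}f/g$ in the quotient case), which accounts exactly for the correction terms on the right-hand sides of (iii) and (iv).

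For (v), by construction $E_{p}(\cdot,s)=e_{q}(\cdot,s)$ with $q(t)=(p(t)-\kappa_{1}(\alpha,t))/\kappa_{0}(\alpha,t)$, so the defining property of the Hilger exponential gives $E_{p}^{\Delta}=q\,E_{p}$. Plugging into \eqref{eq1} produces
\begin{equation*}
\Delta^{\alpha}E_{p} = \kappa_{1}E_{p}+\kappa_{0}qE_{p}
= (\kappa_{1}+p-\kappa_{1})E_{p} = pE_{p},
\end{equation*}
with a clean cancellation of the $\kappa_{1}$ contributions.

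The main obstacle is (vi), where the integrand depends on the upper endpoint through $E_{0}(t,s)$. The plan is to set $F(t)=\int_{a}^{t}f(s)E_{0}(t,s)/\kappa_{0}(\alpha,s)\,\Delta s$ and apply the time-scale Leibniz rule for parameter-dependent integrals, which yields a boundary contribution $f(t)E_{0}(\sigma(t),t)/\kappa_{0}(\alpha,t)$ together with the integral of the partial $\Delta_{t}$-derivative of the integrand. Since $E_{0}(\cdot,s)$ is the Hilger exponential of $-\kappa_{1}(\alpha,\cdot)/\kappa_{0}(\alpha,\cdot)$, that partial $\Delta_{t}$-derivative equals $-(\kappa_{1}(\alpha,t)/\kappa_{0}(\alpha,t))E_{0}(t,s)$, so the integral term collapses to $-(\kappa_{1}(\alpha,t)/\kappa_{0}(\alpha,t))F(t)$. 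Substituting $F^{\Delta}(t)$ back into \eqref{eq1} and using the factor $\kappa_{0}(\alpha,t)$ to absorb the denominators, the $\kappa_{1}F$ contributions cancel and the boundary term survives as $f(t)E_{0}(\sigma(t),t)$. The only subtle point is to check that the standing hypothesis on $\kappa_{0},\kappa_{1}$ guarantees the regressivity needed for $E_{0}$ to be well defined on $\mathbb{T}^{\kappa}$, which is precisely the assumption $1+\mu(p-\kappa_{1})/\kappa_{0}\neq 0$ imposed in the statement (applied with $p\equiv 0$).
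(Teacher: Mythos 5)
Your proposal is correct and follows essentially the same route as the paper's proof: direct substitution into \eqref{eq1}, the time-scale product and quotient rules with regrouping for (iii)--(iv), the defining property of the Hilger exponential for (v), and the Leibniz rule for parameter-dependent delta integrals (Theorem~1.117 of Bohner--Peterson) for (vi), with the same cancellation of the $\kappa_{1}$ terms.
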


\begin{proof}
Relations (i) and (ii) are obvious. From \eqref{eq1},
it also follows (iii)--(vi):
\begin{enumerate}
\item[(iii)]
\begin{equation*}
\begin{split}
\Delta^{\alpha}(fg)
&= \kappa_{0}(fg^{\Delta}+f^{\Delta}g^{\sigma})+\kappa_{1}(fg)\\
&= f\kappa_{0}g^{\Delta}+g^{\sigma}\kappa_{0}f^{\Delta}+\kappa_{1}(fg) \\
&= f(\kappa_{0}g^{\Delta}+\kappa_{1}g)+g^{\sigma}(\kappa_{0}f^{\Delta}
+\kappa_{1}f)-g^{\sigma}\kappa_{1}f \\
&= f\Delta^{\alpha}g+g^{\sigma}\Delta^{\alpha}f-g^{\sigma}\kappa_{1}f;
\end{split}
\end{equation*}

\item[(iv)]
\begin{equation*}
\begin{split}
\Delta^{\alpha}\left(\frac{f}{g}\right)
&= \kappa_{0}\left(\frac{f^{\Delta}g-fg^{\Delta}}{gg^{\sigma}}\right)
+\kappa_{1}\left(\frac{f}{g}\right) \\
&= \frac{\kappa_{0}(f^{\Delta}g-fg^{\Delta})}{gg^{\sigma}}
+\kappa_{1}\left(\frac{f}{g}\right) \\
&=\frac{(\kappa_{0}f^{\Delta}+\kappa_{1}f)g-\kappa_{1}fg
-\left(\kappa_{0}g^{\Delta}+\kappa_{1}g\right)f+f\kappa_{1}g}{gg^{\sigma}}
+\kappa_{1}\left(\frac{f}{g}\right) \\
&= \frac{g\Delta^{\alpha}f-f\Delta^{\alpha}g}{gg^{\sigma}}
+\kappa_{1}\left(\frac{f}{g}\right);
\end{split}
\end{equation*}

\item[(v)]$ \Delta^{\alpha}E_{p(t)}(t,s)$
\begin{equation*}
\begin{split}
\quad\qquad &= \kappa_{1}(\alpha,t)
e_{\frac{p(t)-\kappa_{1}(\alpha,t)}{\kappa_{0}(\alpha,t)}}(t,s)
+\kappa_{0}(\alpha,t)\left(e_{\frac{p(t)
-\kappa_{1}(\alpha,t)}{\kappa_{0}(\alpha,t)}}(t,s)\right)^{\Delta} \\
\quad\qquad &= \kappa_{1}(\alpha,t)\left(e_{\frac{p(t)
-\kappa_{1}(\alpha,t)}{\kappa_{0}(\alpha,t)}}(t,s)\right)
+\kappa_{0}(\alpha,t)\left(\frac{p(t)-\kappa_{1}(\alpha,t)}{\kappa_{0}(\alpha,t)}
e_{\frac{p(t)-\kappa_{1}(\alpha,t)}{\kappa_{0}(\alpha,t)}}(t,s)\right) \\
\quad\qquad &= p(t)e_{\frac{p(t)-\kappa_{1}(\alpha,t)}{\kappa_{0}(\alpha,t)}}(t,s) \\
\quad\qquad &= p(t)E_{p}(t,s);
\end{split}
\end{equation*}

\item[(vi)] we apply Theorem~1.117 of \cite{[B.P.1]}:
\begin{equation*}
\begin{split}
\Delta^{\alpha}&\int_{a}^{t}
\frac{f(s)E_{0}(t,s)}{\kappa_{0}(\alpha,s)}\Delta s
= \kappa_{1}(\alpha,t)\int_{a}^{t}
\frac{f(s)E_{0}(t,s)}{\kappa_{0}(\alpha,s)}\Delta s\\
&\qquad +\kappa_{0}(\alpha,t)
\left(\frac{f(t)E_{0}(\sigma(t),t)}{\kappa_{0}(\alpha,t)}+\int_{a}^{t}
\frac{f(s)(-\frac{\kappa_{1}(\alpha,t)}{\kappa_{0}(\alpha,t)})
E_{0}(t,s)}{\kappa_{0}(\alpha,s)}\Delta s\right) \\
&=\kappa_{1}(\alpha,t)\int_{a}^{t}\frac{f(s)E_{0}(t,s)}{\kappa_{0}(\alpha,s)}
\Delta s+f(t)E_{0}(\sigma(t),t)-\kappa_{1}(\alpha,t)\int_{a}^{t}
\frac{f(s)E_{0}(t,s)}{\kappa_{0}(\alpha,s)}\Delta s\\
&= f(t)E_{0}(\sigma(t),t).
\end{split}
\end{equation*}
\end{enumerate}
The proof is complete.
\end{proof}

\begin{definition}[Conformable integrals of order $\alpha$]
Let $\alpha\in(0,1]$ and let
$\kappa_{0},\kappa_{1}: [0,1]\times\mathbb{T}\rightarrow [0,\infty)$
be rd-continuous and satisfy \eqref{eq2} with
$1+\mu(t)\frac{\kappa_{1}(\alpha,t)}{\kappa_{0}(\alpha,t)}\neq0$
for all $t\in\mathbb{T}^{\kappa}$ and $t_{0}\in\mathbb{T}$.
In light of \eqref{eq4} and items (v) and (vi) of Theorem~\ref{lem1},
we define the conformable antiderivative of order $\alpha$ by
$$
\int \Delta^{\alpha} f(t)\Delta^{\alpha}t
= f(t)+cE_{0}(t,t_{0}),
\quad c\in \mathbb{R}.
$$
The conformable $\alpha$-integral of $f$ over $\mathbb{T}_{[a,t]}$ is defined by
\begin{equation}
\label{eq5}
\int_{a}^{t} f(s) E_{0}(\sigma(t),s) \Delta^{\alpha}s
:= \int_{a}^{t}\frac{f(s)E_{0}(t,s)}{\kappa_{0}(\alpha,s)}\Delta s,
\end{equation}
where on the right-hand side we have the standard
$\Delta$-integral of time scales \cite{[B.P.1],[B.P.2]}.
\end{definition}

\begin{remark}
It follows from \eqref{eq5} that $\displaystyle \Delta^{\alpha}s
=\frac{E_{0}(t,s)}{E_{0}(\sigma(t),s)\kappa_{0}(\alpha,s)}\Delta s$.
\end{remark}

\begin{theorem}[Basic properties of the conformable $\alpha$-integral]
\label{lem2}
Let the conformable differential operator on time scales $\Delta^{\alpha}$
be given as in \eqref{eq1}; the integral be given as in \eqref{eq5};
with $\alpha\in (0,1]$. Let functions $\kappa_{0},\kappa_{1}$
be rd-continuous and satisfy \eqref{eq2} with
$1+\mu(t)\frac{\kappa_{1}(\alpha,t)}{\kappa_{0}(\alpha,t)}\neq 0$
for all $t\in\mathbb{T}^{\kappa}$ and let $f$ and $g$
be $\Delta$-differentiable, as needed. Then,
\begin{itemize}
\item[(i)] the derivative of the definite integral of $f$ is given by
$$
\Delta^{\alpha}\left(\int_{a}^{t}f(s)E_{0}(\sigma(t),s)\Delta^{\alpha}s\right)
=f(t)E_{0}(\sigma(t),t);
$$
\item[(ii)] the definite integral of the derivative of $f$ is given by
$$
\int_{a}^{t} \Delta^{\alpha}[f(s)]E_{0}(t,\sigma(s))\Delta^{\alpha}s
=f(s)E_{0}(t,s)\mid_{s=a}^{s=t}
+\int_{a}^{t}f(s)E_{0}(t,\sigma(s))\kappa_{1}(\alpha,s)\Delta^{\alpha}s;
$$
\item[(iii)] an integration by parts formula is given by
\begin{multline*}
\int_{a}^{b}f(t)\Delta^{\alpha}[g(t)]E_{0}(b,\sigma(t))\Delta^{\alpha}t
=f(t)g(t)E_{0}(b,\sigma(t))\mid_{t=a}^{t=b}\\
+\int_{a}^{b}\left[(g(t)-g(\sigma(t)))[f(t)]\kappa_{1}(\alpha,t)
-g(\sigma(t))\Delta^{\alpha}f(t)
\right]E_{0}(b,\sigma(t))\Delta^{\alpha}t;
\end{multline*}
\item[(iv)] a version of the Leibniz rule for differentiation of an integral
is given by
\begin{multline*}
\Delta^{\alpha}\left[\int_{a}^{t}f(t,s)E_{0}(\sigma(t),s)\Delta^{\alpha}s\right]\\
=f(\sigma(t),t)E_{0}(\sigma(t),t)
+ \int_{a}^{t} \Delta_t^{\alpha}(f(t,s))
E_{0}(t,s) E_{0}(\sigma(t),s) \Delta^\alpha s,
\end{multline*}
where the derivative inside the last integral is with respect to $t$.
\end{itemize}
\end{theorem}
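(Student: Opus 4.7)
The plan is to deduce each of the four items from the definition \eqref{eq5} of the conformable $\alpha$-integral and the explicit form \eqref{eq1} of $\Delta^{\alpha}$, reducing the claims to standard facts of the $\Delta$-calculus in \cite{[B.P.1]}. Throughout, the key technical input will be the derivative rules for the Hilger exponential $e_{q}(t,s)$ with $q = -\kappa_{1}/\kappa_{0}$, namely $\Delta_{t} E_{0}(t,s) = -\frac{\kappa_{1}(\alpha,t)}{\kappa_{0}(\alpha,t)} E_{0}(t,s)$ and $\Delta_{s} E_{0}(t,s) = \frac{\kappa_{1}(\alpha,s)}{\kappa_{0}(\alpha,s)} E_{0}(t,\sigma(s))$.

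Item (i) is immediate: applying \eqref{eq5} rewrites the left-hand side as $\Delta^{\alpha}\bigl(\int_{a}^{t}\frac{f(s)E_{0}(t,s)}{\kappa_{0}(\alpha,s)}\Delta s\bigr)$, which is precisely item (vi) of Theorem~\ref{lem1}. For (ii), I would substitute $\Delta^{\alpha}[f(s)] = \kappa_{0}(\alpha,s)f^{\Delta}(s) + \kappa_{1}(\alpha,s)f(s)$ into the left-hand side. The $\kappa_{1}f$ piece is exactly the last integral on the right. The remaining $\kappa_{0}f^{\Delta}$ piece is handled by the standard time-scale integration by parts with $u = f$ and $v(s) = E_{0}(t,s)$, noting $v^{\sigma}(s) = E_{0}(t,\sigma(s))$; the boundary term produces $f(s)E_{0}(t,s)\mid_{s=a}^{s=t}$ and the extra volume term arising from $v^{\Delta}$ is absorbed using the derivative rule for $E_{0}(t,\cdot)$ recorded above.

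For (iii), I would apply (ii) with $fg$ in place of $f$ (and $b$ in place of $t$), and then replace $\Delta^{\alpha}(fg)$ inside the integrand using the conformable product rule from item (iii) of Theorem~\ref{lem1}; rearranging so as to isolate the wanted integral on the left-hand side then yields the stated formula. For (iv), I would expand $\Delta_{t}^{\alpha}$ through \eqref{eq1}: the $\kappa_{1}$ part reproduces $\kappa_{1}(\alpha,t)$ times the integral itself, while the $\kappa_{0}$ part requires differentiating a $\Delta$-integral with variable upper limit, which is exactly Theorem~1.117 of \cite{[B.P.1]}. That theorem supplies both the evaluated contribution $f(\sigma(t),t)E_{0}(\sigma(t),t)$ and a volume term in which $\Delta_{t}E_{0}(t,s)$ appears; the factor $-\kappa_{1}(\alpha,t)/\kappa_{0}(\alpha,t)$ it produces cancels against the $\kappa_{1}$ contribution from the first part, and what remains is repackaged as the $\Delta_{t}^{\alpha}(f(t,s))$ integrand claimed on the right.

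The main obstacle is the bookkeeping of the $E_{0}$ factors: they depend on both the upper limit $t$ and the integration variable $s$, so correctly matching the output of time-scale integration by parts and of the Leibniz rule against the formal $\Delta^{\alpha}s$ notation of \eqref{eq5} will require careful checking that all the $\kappa_{0}$-weightings and the $\sigma$-shifts in the $E_{0}$ arguments line up and that the $\kappa_{1}$-terms cancel in pairs exactly as above.
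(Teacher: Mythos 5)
Your proposal is correct in outline and, for items (i), (iii) and (iv), follows essentially the same path as the paper: (i) is just Theorem~\ref{lem1}(vi) read through \eqref{eq5}; (iii) is obtained by combining (ii) applied to $fg$ with the product rule of Theorem~\ref{lem1}(iii) (the paper performs the same two steps, merely in the opposite order, substituting the product rule first and then invoking (ii)); and (iv) is proved exactly as you describe, by unwinding the $\Delta^{\alpha}s$ notation via \eqref{eq5}, expanding $\Delta^{\alpha}$ by \eqref{eq1}, and invoking Theorem~1.117 of \cite{[B.P.1]}. The genuine divergence is in (ii): the paper never uses classical $\Delta$-integration by parts, but instead writes
$\Delta^{\alpha}[f(s)]E_{0}(t,\sigma(s))
=\Delta^{\alpha}\bigl[f(s)E_{0}(t,s)\bigr]
+f(s)E_{0}(t,\sigma(s))\kappa_{1}(\alpha,s)$
--- an instance of the conformable product rule with $g(s)=E_{0}(t,s)$ --- and then integrates the exact conformable derivative using the antiderivative formula, whereas you expand $\Delta^{\alpha}f=\kappa_{0}f^{\Delta}+\kappa_{1}f$ and fall back on standard time-scale integration by parts for the $\kappa_{0}f^{\Delta}$ piece. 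Your route is more elementary and self-contained (it rests only on facts from \cite{[B.P.1]}), at the price of the heavier $E_{0}$-bookkeeping you yourself flag; the paper's route is shorter because it reuses the machinery of Theorem~\ref{lem1}. A point in your favour: in (iv) you explicitly track the $t$-dependence of $E_{0}(t,s)$ inside the integral and the resulting $-\kappa_{1}(\alpha,t)/\kappa_{0}(\alpha,t)$ factor from $\Delta_{t}E_{0}(t,s)$, which the paper's displayed computation silently drops when differentiating $f(t,s)E_{0}(t,s)/\kappa_{0}(\alpha,s)$ with respect to $t$. That cancellation is precisely the delicate step, and you should verify which form of the $\Delta$-product rule (with $f^{\sigma}$ on the derivative of $E_{0}$ or on the derivative of $f$) actually makes the $\kappa_{1}$-terms cancel so as to leave the integrand in the form stated in the theorem; as written, your claim that ``what remains is repackaged as $\Delta_{t}^{\alpha}(f(t,s))$'' still needs that explicit check.
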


\begin{proof}
The proof of (i) is clear. The integration by parts formula (ii)
follows easily:
\begin{equation*}
\begin{split}
\int_{a}^{t}\Delta^{\alpha}&\left[ f(s)\right]E_{0}(t,\sigma(s))\Delta^{\alpha}s\\
&=\int_{a}^{t}\Delta^{\alpha}\left[ f(s)E_{0}(t,s)\right]\Delta^{\alpha}s
+ \int_{a}^{t}f(s)E_{0}(t,\sigma(s))\kappa_{1}(\alpha,s)\Delta^{\alpha}s\\
&=f(s)E_{0}(t,s)\mid_{s=a}^{s=t}+\int_{a}^{t}
f(s)E_{0}(t,\sigma(s))\kappa_{1}(\alpha,s)\Delta^{\alpha}s.
\end{split}
\end{equation*}
Now we prove (iii):
\begin{equation*}
\begin{split}
\int_{a}^{b} & f(s)\Delta^{\alpha}(g)(s)E_{0}(b,s)\Delta^{\alpha}s\\
&= \int_{a}^{b}\Delta^{\alpha}(fg)(s)E_{0}(b,\sigma(s))\Delta^{\alpha}s\\
&\quad -\int_{a}^{b}g^{\sigma}(s)\left[\Delta^{\alpha}(f)(s)
-f(s)\kappa_{1}(\alpha,s) \right]E_{0}(b,\sigma(s))\Delta^{\alpha}s\\
&= (fg)(s)E_{0}(b,\sigma(s))\vert_{a}^{b}+\int_{a}^{b}(fg)(s)
E_{0}(b,\sigma(s))\kappa_{1}(\alpha,s)\Delta^{\alpha}s\\
&\quad -\int_{a}^{b}g^{\sigma}(s)\left[\Delta^{\alpha}(f)(s)
-f(s)\kappa_{1}(\alpha,s) \right] E_{0}(b,\sigma(s))\Delta^{\alpha}s\\
&=(fg)(s)E_{0}(b,\sigma(s))\vert_{a}^{b}\\
&\quad +\int_{a}^{b}\left[ (fg)(s)\kappa_{1}(\alpha,s)
-g^{\sigma}(s)(\Delta^{\alpha}(f)(s)-f(s)\kappa_{1}(\alpha,s)) \right] 
E_{0}(b,\sigma(s))\Delta^{\alpha}s\\
&=(fg)(s)E_{0}(b,\sigma(s))\vert_{a}^{b}\\
&\quad +\int_{a}^{b}\left[(g(s)-g^{\sigma}(s))\kappa_{1}(\alpha,s)f(s)
-g^{\sigma} (s)\Delta^{\alpha}f(s)\right] E_{0}(b,\sigma(s))\Delta^{\alpha}s.
\end{split}
\end{equation*}
For (iv), we have:
\begin{equation*}
\begin{split}
\Delta^{\alpha}&\left[ \int_{a}^{t}f(t,s)E_{0}(\sigma(t),s)\Delta^{\alpha} s \right]\\
&=\Delta^{\alpha}\int_{a}^{t}f(t,s)E_{0}(\sigma(t),s)
\frac{E_{0}(t,s)}{E_{0}(\sigma(t),s)\kappa_{0}(\alpha,s)}\Delta s\\
&=\Delta^{\alpha}\int_{a}^{t}\frac{f(t,s)E_{0}(t,s)}{\kappa_{0}(\alpha,s)}\Delta s\\
&=\kappa_{0}(\alpha,t)\left( \int_{a}^{t}
\frac{f(t,s)E_{0}(t,s)}{\kappa_{0}(\alpha,s)}\Delta s \right)^{\Delta}
+\kappa_{1}(\alpha,t)\int_{a}^{t}\frac{f(t,s)E_{0}(t,s)}{\kappa_{0}(\alpha,t)}\Delta s\\
&=\kappa_{0}(\alpha,t)\left(\frac{f(\sigma(t),t)E_{0}(\sigma(t),t)}{\kappa_{0}(\alpha,t)}
+\int_{a}^{t}\frac{f^{\Delta}(t,s)E_{0}(t,s)}{\kappa_{0}(\alpha,s)} \Delta s \right)\\
&\quad +\kappa_{1}(\alpha,t)\int_{a}^{t}\frac{f(t,s)E_{0}(t,s)}{\kappa_{0}(\alpha,s)}\Delta s\\
&=f(\sigma(t),t)E_{0}(\sigma(t),t) + \int_{a}^{t}\frac{\Delta_t^{\alpha}(f(t,s))
E_{0}(t,s)}{\kappa_{0}(\alpha,s)}\Delta s,
\end{split}
\end{equation*}
where the derivative inside the last integral is with respect to $t$.
\end{proof}

% ----------------------------------------

\subsection{Linear 2nd-order conformable differential equations on time scales}

Let $\mathbb{T}$ be an arbitrary time scale, $\alpha\in[0,1]$,
and let $\kappa_{0},\kappa_{1} :[0,1]\times \mathbb{T}\rightarrow [0,\infty)$
be rd-continuous functions such that \eqref{eq2} holds with
$1+\mu(t)\frac{\kappa_{1}(\alpha,t)}{\kappa_{0}(\alpha,t)}\neq 0$
for all $t\in\mathbb{T}^{\kappa}$. In addition, let $\Delta^{\alpha}$
be as in \eqref{eq1}, and let $t_{0}\in\mathbb{T}$.
In this section we are concerned with the following
linear second-order conformable dynamic equation on time scales
with constant coefficients:
\begin{equation}
\label{eq11}
\Delta^{\alpha}\Delta^{\alpha}y(t)+a\Delta^{\alpha}y(t)+by(t)=f(t),
\quad t\in\mathbb{T}^{\kappa^{2}}_{[t_{0},\infty)},
\end{equation}
where we assume $a,b\in\mathbb{R,}$ $f\in C_{rd}$.
Introduce the operator $L_{2\Delta^{\alpha}} : C_{rd}^{2}\rightarrow C_{rd}$ by
\begin{equation}
\label{eq:op:L2}
L_{2\Delta^{\alpha}}(y)(t)
= \Delta^{\alpha}\Delta^{\alpha}y(t)+a\Delta^{\alpha}y(t) + b y(t)
\end{equation}
for all $t\in\mathbb{T}^{\kappa^{2}}_{[t_{0},\infty)}$.

\begin{lemma}
The operator  $L_{2\Delta^{\alpha}}$ defined by \eqref{eq:op:L2}
is a linear operator, i.e.,
$$
L_{2\Delta^{\alpha}}\left(p y_{1}+q y_{2}\right)
=p L_{2\Delta^{\alpha}}(y_{1})
+ q L_{2\Delta^{\alpha}}(y_{2}),
$$
where $p,q\in\mathbb{R}$ and $y_{1},y_{2}\in  C^{2}_{rd}$.
If $y_{1}$ and $y_{2}$ solve the homogeneous equation
$$
L_{2\Delta^{\alpha}}y=0,
$$
then so does $y=p y_{1}+qy_{2}$, $p,q\in \mathbb{R}$.
\end{lemma}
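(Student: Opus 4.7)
The plan is to reduce the linearity of $L_{2\Delta^{\alpha}}$ entirely to the linearity of the first-order conformable operator $\Delta^{\alpha}$, which is already recorded as item (i) of Theorem~\ref{lem1}. Concretely, I would apply that item once to obtain
$\Delta^{\alpha}(p y_{1} + q y_{2}) = p\,\Delta^{\alpha} y_{1} + q\,\Delta^{\alpha} y_{2}$,
and then apply it a second time (to the function $\Delta^{\alpha}(py_1+qy_2)$) to pull $p$ and $q$ through the outer $\Delta^{\alpha}$, yielding
$\Delta^{\alpha}\Delta^{\alpha}(p y_{1} + q y_{2}) = p\,\Delta^{\alpha}\Delta^{\alpha} y_{1} + q\,\Delta^{\alpha}\Delta^{\alpha} y_{2}$.
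Multiplying the first identity by $a$, using the trivial fact $b(p y_{1} + q y_{2}) = p(b y_{1}) + q(b y_{2})$, and adding the three relations, I regroup to get
$L_{2\Delta^{\alpha}}(p y_{1} + q y_{2}) = p L_{2\Delta^{\alpha}}(y_{1}) + q L_{2\Delta^{\alpha}}(y_{2})$,
which is exactly the claimed linearity.

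For the superposition part, I would simply substitute the two hypotheses $L_{2\Delta^{\alpha}} y_{1} = 0$ and $L_{2\Delta^{\alpha}} y_{2} = 0$ into the linearity identity just derived, obtaining $L_{2\Delta^{\alpha}}(p y_{1} + q y_{2}) = p \cdot 0 + q \cdot 0 = 0$, so that $y = p y_{1} + q y_{2}$ solves the homogeneous equation $L_{2\Delta^{\alpha}} y = 0$.

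There is essentially no hard step: the statement is a direct consequence of linearity at the level of the first-order operator. The only subtlety worth a brief remark is to verify that the class $C^{2}_{rd}$ is closed under the linear combinations involved, so that $\Delta^{\alpha}\Delta^{\alpha}(py_{1}+qy_{2})$ is well-defined and lies in $C_{rd}$; this follows from rd-continuity of $\kappa_{0}, \kappa_{1}$ built into the setting of Proposition~\ref{df1} together with the fact that sums and scalar multiples of twice $\Delta$-differentiable functions are themselves twice $\Delta$-differentiable.
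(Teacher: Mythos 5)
Your proposal is correct and follows essentially the same route as the paper: the paper likewise invokes item (i) of Theorem~\ref{lem1} to distribute $p$ and $q$ through both occurrences of $\Delta^{\alpha}$ and then regroups, with the superposition claim obtained by substituting $L_{2\Delta^{\alpha}}y_{1}=L_{2\Delta^{\alpha}}y_{2}=0$. Your version is merely more explicit about the two separate applications of linearity and the closure of $C^{2}_{rd}$ under linear combinations, which the paper leaves implicit.
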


\begin{proof}
Using (i) of Theorem~\ref{lem1}, we find that
\begin{equation*}
\begin{split}
L_{2\Delta^{\alpha}}&(p y_{1}+q y_{2})(t)\\
&= \Delta^{\alpha}\Delta^{\alpha}(p y_{1}(t)+q y_{2}(t))
+a\Delta^{\alpha}(py_{1}(t)+q y_{2}(t))+b(p y_{1}(t)+q y_{2}(t)) \\
&= pL_{2\Delta^{\alpha}}(y_{1})(t)+q L_{2\Delta^{\alpha}}(y_{2})(t)=0
\end{split}
\end{equation*}
for all $t\in\mathbb{T}^{\kappa^{2}}_{[t_{0},\infty)}$
and all $p,q\in\mathbb{R}$.
\end{proof}

\begin{definition}
Let $a,b\in\mathbb{R}$ and $f\in C_{rd}$.
Equation \eqref{eq11} is called regressive if
$$
\kappa^{2}_{0}-\mu\kappa_{0}(a+2\kappa_{1})
+\mu^{2}(b+a\kappa_{1}+\kappa^{2}_{1})\neq0
$$
for all $t\in \mathbb{T}^{\kappa}$.
\end{definition}

\begin{theorem}[Existence and uniqueness of solution]
Let $t_{0}\in \mathbb{T}^{\kappa}$ and functions $\kappa_{i}(\alpha,t)$,
$i=0,1$, be $\Delta_{t}$-differentiable and continuous with respect to $\alpha$.
Assume that the dynamic equation \eqref{eq11} is regressive. If
$L_{2\Delta^{\alpha}}y(t)=0$ admits two solutions $y_{1}$ and $y_{2}$
with $y_{1}(t)\Delta^{\alpha}y_{2}(t) \neq \Delta^{\alpha}y_{1}(t)y_{2}(t)$
for all $t\in\mathbb{T}^{\kappa^{2}}_{[t_{0},\infty)}$, then the initial value problem
\begin{equation}
\label{eq12}
L_{2\Delta^{\alpha}}y(t)=0, \quad y(t_{0}) = y_0,
\quad \Delta^{\alpha}y(t_{0})=y_{0}^{\alpha},
\end{equation}
where $y_{0}$ and $y_{0}^{\alpha}$ are given constants,
has a unique solution defined on  $\mathbb{T}_{[t_{0},\infty)}$.
\end{theorem}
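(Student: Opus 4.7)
The plan is to reduce the conformable IVP \eqref{eq12} to an ordinary regressive second-order linear dynamic equation on $\mathbb{T}$ and then invoke the classical time-scale existence-uniqueness theorem, while simultaneously exhibiting existence via the linear combination $c_{1}y_{1}+c_{2}y_{2}$ suggested by the hypothesis on the two given solutions. First I would substitute $\Delta^{\alpha}y=\kappa_{1}y+\kappa_{0}y^{\Delta}$ and apply the time-scale product rule to $\Delta^{\alpha}\Delta^{\alpha}y=\kappa_{1}\Delta^{\alpha}y+\kappa_{0}(\kappa_{1}y+\kappa_{0}y^{\Delta})^{\Delta}$, using the $\Delta_{t}$-differentiability of $\kappa_{i}$. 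Collecting coefficients yields
$$
L_{2\Delta^{\alpha}}y(t)=\kappa_{0}(\alpha,t)\kappa_{0}^{\sigma}(\alpha,t)\,y^{\Delta^{2}}(t)+A(t)\,y^{\Delta}(t)+B(t)\,y(t),
$$
with $A=\kappa_{0}(2\kappa_{1}+\mu\kappa_{1}^{\Delta}+\kappa_{0}^{\Delta}+a)$ and $B=\kappa_{1}^{2}+\kappa_{0}\kappa_{1}^{\Delta}+a\kappa_{1}+b$, both rd-continuous. Dividing through by $\kappa_{0}\kappa_{0}^{\sigma}>0$ gives a standard form $y^{\Delta^{2}}+P(t)y^{\Delta}+Q(t)y=0$.

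A direct check then shows that the paper's regressivity hypothesis matches the Bohner--Peterson regressivity $\kappa_{0}\kappa_{0}^{\sigma}-\mu A+\mu^{2}B\neq 0$ of the reduced equation: expanding with $\kappa_{i}^{\sigma}=\kappa_{i}+\mu\kappa_{i}^{\Delta}$, most terms cancel and one is left with exactly $\kappa_{0}^{2}-\mu\kappa_{0}(a+2\kappa_{1})+\mu^{2}(b+a\kappa_{1}+\kappa_{1}^{2})\neq 0$. Translating the initial data, $y(t_{0})=y_{0}$ is unchanged while $\Delta^{\alpha}y(t_{0})=y_{0}^{\alpha}$ becomes
$$
y^{\Delta}(t_{0})=\frac{y_{0}^{\alpha}-\kappa_{1}(\alpha,t_{0})\,y_{0}}{\kappa_{0}(\alpha,t_{0})}.
$$
The classical existence-uniqueness theorem for regressive linear dynamic equations on time scales now supplies a unique solution on $\mathbb{T}_{[t_{0},\infty)}$.

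For existence I would also give the direct construction $y=c_{1}y_{1}+c_{2}y_{2}$. By the linearity lemma proved just above, $L_{2\Delta^{\alpha}}y=0$ holds automatically, and the initial conditions become the $2\times 2$ linear system
$$
c_{1}y_{1}(t_{0})+c_{2}y_{2}(t_{0})=y_{0},\qquad c_{1}\Delta^{\alpha}y_{1}(t_{0})+c_{2}\Delta^{\alpha}y_{2}(t_{0})=y_{0}^{\alpha},
$$
whose determinant $y_{1}(t_{0})\Delta^{\alpha}y_{2}(t_{0})-\Delta^{\alpha}y_{1}(t_{0})\,y_{2}(t_{0})$ is nonzero by hypothesis; thus $c_{1},c_{2}$ are uniquely determined.

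The chief obstacle I expect is the algebraic bookkeeping: matching the two regressivity conditions requires careful expansion via $\kappa_{i}^{\sigma}=\kappa_{i}+\mu\kappa_{i}^{\Delta}$ together with the product-rule identity for $(\kappa_{1}y+\kappa_{0}y^{\Delta})^{\Delta}$. The linear-combination construction gives existence painlessly but cannot by itself rule out solutions outside $\mathrm{span}\{y_{1},y_{2}\}$; ruling this out is precisely what the reduction to a standard regressive equation accomplishes, so the two halves of the proof are genuinely complementary rather than redundant.
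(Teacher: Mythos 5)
Your proof is correct, but it takes a substantially different (and more complete) route than the paper's. The paper's own proof consists exactly of the second half of yours and nothing more: it forms $y=py_{1}+qy_{2}$, notes by linearity that this solves $L_{2\Delta^{\alpha}}y=0$, and solves the $2\times 2$ system $MX=B$ for $p,q$, the matrix $M$ being invertible by the hypothesis $y_{1}(t_{0})\Delta^{\alpha}y_{2}(t_{0})\neq\Delta^{\alpha}y_{1}(t_{0})y_{2}(t_{0})$. As you yourself observe, that argument yields existence together with uniqueness of the coefficients within $\mathrm{span}\{y_{1},y_{2}\}$ only; it does not exclude solutions of the initial value problem outside that span, so the paper's treatment of \emph{uniqueness} is incomplete as written. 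Your first half --- rewriting $L_{2\Delta^{\alpha}}y$ as $\kappa_{0}\kappa_{0}^{\sigma}y^{\Delta^{2}}+Ay^{\Delta}+By$ (your $A$ and $B$ are correct, since $\kappa_{1}+\kappa_{1}^{\sigma}=2\kappa_{1}+\mu\kappa_{1}^{\Delta}$) and checking that $\kappa_{0}\kappa_{0}^{\sigma}-\mu A+\mu^{2}B$ collapses to the paper's regressivity expression $\kappa_{0}^{2}-\mu\kappa_{0}(a+2\kappa_{1})+\mu^{2}(b+a\kappa_{1}+\kappa_{1}^{2})$ --- is precisely what supplies genuine global uniqueness via the standard Bohner--Peterson theory, and as a bonus it explains where the paper's otherwise unmotivated definition of regressivity for the conformable equation comes from. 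Two small caveats: the reduction requires $\kappa_{0}(\alpha,\cdot)\neq 0$ and hence $\alpha\in(0,1]$ (at $\alpha=0$ the operator degenerates and the equation becomes algebraic), and to invoke the classical theorem you should additionally assume that $\kappa_{0}^{\Delta}$ and $\kappa_{1}^{\Delta}$ are rd-continuous so that $A$ and $B$ are rd-continuous, since $\Delta_{t}$-differentiability alone does not guarantee this.
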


\begin{proof}
If $y_{1},y_{2}$ are two solutions of $L_{2\Delta^{\alpha}}y(t)=0$,
then $y(t)=py_{1}(t)+qy_{2}(t)$, $p,q\in\mathbb{R}$, is a solution of
$L_{2\Delta^{\alpha}}y(t)=0$. Therefore, we want to see if we can pick
$p$ and $q$ so that $y_{0}=y(t_{0})=py_{1}(t_{0})+qy_{2}(t_{0})$,
$y_{0}^{\alpha}=p \Delta^{\alpha}y_{1}(t_{0})+q\Delta^{\alpha}y_{2}(t_{0})$. Let
$$
\mathbf{M} =
\left(
\begin{array}{ccc}
y_{1}(t_{0}) & y_{2}(t_{0})\\
\Delta^{\alpha}y_{1}(t_{0}) & \Delta^{\alpha}y_{2}(t_{0})\\
\end{array}
\right),
\quad
\mathbf{X} =
\left(
\begin{array}{ccc}
p \\
q \\
\end{array}
\right),
\quad
\mathbf{B} =
\left(
\begin{array}{ccc}
y_{0} \\
y_{0}^{\alpha} \\
\end{array}
\right).
$$
System $M\times X=B$ has a unique solution because
we are assuming matrix $M$ to be invertible.
\end{proof}

\begin{definition}
For two $\Delta^{\alpha}$-differentiable functions on $\mathbb{T}_{[t_{0,\infty})}$
$y_{1}$ and $y_{2}$,  we define the Wronskian $W=W(y_{1},y_{2})$ by
\begin{displaymath}
W(t) = \det
\left(
\begin{array}{ccc}
y_{1}(t) & y_{2}(t)\\
\Delta^{\alpha}y_{1}(t) & \Delta^{\alpha}y_{2}(t)\\
\end{array}
\right),
\quad t\in\mathbb{T}_{[t_{0,\infty})}.
\end{displaymath}
We say that two solutions $y_{1}$ and $y_{2}$ of $L_{2\Delta^{\alpha}}y = 0$
form a fundamental set of solutions for $L_{2\Delta^{\alpha}}y = 0$ provided
$W(y_{1},y_{2})(t)\neq 0$ for all $ t \in \mathbb{T}^{\kappa^2}_{[t_{0,\infty})}$.
\end{definition}

\begin{theorem}
If the pair of functions $y_{1}$ and $y_{2}$ form a fundamental system
of solutions for $L_{2\Delta^{\alpha}}y = 0$,
$t\in\mathbb{T}^{\kappa^{2}}_{[t_{0,\infty})}$, then
\begin{equation}
\label{eq:gen:sol}
y(t)=py_{1}(t)+qy_{2}(t),
\quad p,q\in\mathbb{R},
\end{equation}
is a general solution of $L_{2\Delta^{\alpha}}y = 0$,
$t\in\mathbb{T}^{\kappa^{2}}_{[t_{0,\infty})}$.
In particular,
the solution of the initial value problem \eqref{eq12} is given by
$$
y(t)=\frac{\Delta^{\alpha}y_{2}(t_{0})y_{0}
-y_{2}(t_{0})y_{0}^{\alpha}}{W(y_{1},y_{2})(t_{0})}y_{1}(t)
+\frac{y_{1}(t_{0})y_{0}^{\alpha}
-\Delta^{\alpha}y_{1}(t_{0})y_{0}}{W(y_{1},y_{2})(t_{0})}y_{2}(t).
$$
\end{theorem}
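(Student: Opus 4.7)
The plan is to combine the linearity result for $L_{2\Delta^{\alpha}}$ with the existence and uniqueness theorem already proved, and then simply solve a $2\times 2$ linear system by Cramer's rule to obtain the explicit representation.

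First, I would verify that \eqref{eq:gen:sol} is indeed a solution for every pair $p,q\in\mathbb{R}$; this is immediate from the preceding lemma, since $L_{2\Delta^{\alpha}}(py_{1}+qy_{2})=pL_{2\Delta^{\alpha}}y_{1}+qL_{2\Delta^{\alpha}}y_{2}=0$. To prove that every solution $y$ of $L_{2\Delta^{\alpha}}y=0$ on $\mathbb{T}^{\kappa^{2}}_{[t_{0},\infty)}$ has this form, I would set $y_{0}:=y(t_{0})$ and $y_{0}^{\alpha}:=\Delta^{\alpha}y(t_{0})$, and seek $p,q\in\mathbb{R}$ with
\begin{equation*}
p\,y_{1}(t_{0})+q\,y_{2}(t_{0})=y_{0},
\qquad p\,\Delta^{\alpha}y_{1}(t_{0})+q\,\Delta^{\alpha}y_{2}(t_{0})=y_{0}^{\alpha}.
\end{equation*}
Since $y_{1},y_{2}$ form a fundamental system of solutions, $W(y_{1},y_{2})(t_{0})\neq 0$, so this linear system admits a unique pair $(p,q)$. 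Then $\tilde y:=py_{1}+qy_{2}$ solves $L_{2\Delta^{\alpha}}\tilde y=0$ and shares the initial data $(y_{0},y_{0}^{\alpha})$ with $y$ at $t_{0}$. By the preceding existence-and-uniqueness theorem, $\tilde y\equiv y$ on $\mathbb{T}_{[t_{0},\infty)}$, proving that \eqref{eq:gen:sol} describes every solution.

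For the initial value problem \eqref{eq12}, I would make the above linear system explicit and apply Cramer's rule. Writing
\begin{equation*}
\mathbf{M}\mathbf{X}=\mathbf{B},\qquad \det\mathbf{M}=W(y_{1},y_{2})(t_{0})\neq 0,
\end{equation*}
with $\mathbf{M},\mathbf{X},\mathbf{B}$ as in the proof of the previous theorem, one obtains
\begin{equation*}
p=\frac{\Delta^{\alpha}y_{2}(t_{0})\,y_{0}-y_{2}(t_{0})\,y_{0}^{\alpha}}{W(y_{1},y_{2})(t_{0})},
\qquad
q=\frac{y_{1}(t_{0})\,y_{0}^{\alpha}-\Delta^{\alpha}y_{1}(t_{0})\,y_{0}}{W(y_{1},y_{2})(t_{0})}.
\end{equation*}
Substituting these back into \eqref{eq:gen:sol} yields the asserted closed form.

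The substantive step is the appeal to uniqueness of solutions to \eqref{eq12}; the Cramer computation is routine, and the main obstacle is being careful that the regressivity and sign hypotheses on $\kappa_{0},\kappa_{1},\mu$ assumed in the earlier theorem are in force here, so that the existence-and-uniqueness result can legitimately be invoked to conclude $\tilde y\equiv y$.
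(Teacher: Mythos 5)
Your proposal is correct and is essentially the intended argument: the paper itself offers no details here, saying only that the proof is ``similar to the one of Theorem~3.7 of \cite{[B.P.1]}'', and that reference's proof is exactly your combination of linearity, the nonvanishing Wronskian at $t_{0}$, Cramer's rule, and the uniqueness assertion of the preceding existence-and-uniqueness theorem. Your closing remark about checking that the regressivity and $\Delta_{t}$-differentiability hypotheses of that earlier theorem are in force is the right point to be careful about, since the present theorem's statement does not repeat them explicitly.
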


\begin{remark}
By general solution we mean that every function of form \eqref{eq:gen:sol}
is a solution and every solution is of this form.
\end{remark}

\begin{proof}
The proof is similar to the one of Theorem~3.7 of \cite{[B.P.1]}.
\end{proof}

% -----------------------------------------------------

\subsection{Hyperbolic and trigonometric functions}

Now we consider the linear second-order homogeneous dynamic
conformable equation with constant coefficients
\begin{equation}
\label{eq14}
\Delta^{\alpha}\Delta^{\alpha}y(t)+a\Delta^{\alpha}y(t)+by(t)=0,
\quad a,b\in\mathbb{R},
\quad t\in\mathbb{T}^{\kappa^{2}}_{[t_{0},\infty)}.
\end{equation}
We assume \eqref{eq14} to be regressive, i.e.,
$\kappa_{0}-\mu(a+2\kappa_{1})+\mu^{2}(b+a\kappa_{1}
+\kappa^{2}_{1})\neq 0$, $t\in \mathbb{T}^{\kappa}$.
Let $\lambda\in \mathbb{C}$ be such that
$1+ \mu(t)\frac{\lambda-\kappa_{1}(\alpha,t)}{\kappa_{0}(\alpha,t)}\neq 0$,
$t\in\mathbb{T}^{\kappa}$, and $y(t)=E_{\lambda}(t,t_{0})$,
$t\in\mathbb{T}^{\kappa}_{[t_{0,\infty})}$, be
a solution of \eqref{eq14}. If $y(t)= E_{\lambda}(t,t_{0})$, then
$$
\Delta^{\alpha}\Delta^{\alpha}y(t)+a\Delta^{\alpha}y(t)+by(t)
=(\lambda^{2}+a\lambda +b)E_{\lambda}(t,t_{0})
$$
and, because $E_{\lambda}(t,t_{0})\neq 0$, $y(t)=E_{\lambda}(t,t_{0})$
is a solution of \eqref{eq14} if and only if $\lambda$ satisfies
the characteristic equation of \eqref{eq14}:
\begin{equation}
\label{eq15}
\lambda^{2}+a\lambda+b=0.
\end{equation}
The solutions $\lambda_{1}$ and $\lambda_{2}$ of \eqref{eq15}
are given by
\begin{equation}
\label{eq16}
\lambda_{1}=\frac{-a-\sqrt{a^{2}-4b}}{2}
\quad \text{ and } \quad
\lambda_{2}=\frac{-a+\sqrt{a^{2}-4b}}{2}
\end{equation}
and, since \eqref{eq14} is regressive,
$1+ \mu(t)\frac{\lambda_{1}-\kappa_{1}(\alpha,t)}{\kappa_{0}(\alpha,t)}\neq0$
and $1+ \mu(t)\frac{\lambda_{2}-\kappa_{1}(\alpha,t)}{\kappa_{0}(\alpha,t)}\neq0$
for all $t\in\mathbb{T}^{\kappa}$.

\begin{theorem}
Suppose equation \eqref{eq14} is regressive and  $a^{2}-4b\neq 0$.
Then, $E_{\lambda_{1}}(\cdot,t_{0})$ and $E_{\lambda_{2}}(\cdot,t_{0})$
form a fundamental system of \eqref{eq14}, where $t_{0}\in \mathbb{T}$
and $\lambda_{1}$ and $\lambda_{2}$ are given as in \eqref{eq16},
and the solution of the initial value problem
\begin{equation}
\label{eq17}
\Delta^{\alpha}\Delta^{\alpha}y(t)+a\Delta^{\alpha}y(t)+by(t)=0,
\quad y(t_{0})=y_{0},
\quad \Delta^{\alpha}y_{t_{0}}=y_{0}^{\alpha},
\end{equation}
is given by
$$
y_{0}(t)=\frac{E_{\lambda_{1}}(t,t_{0})
+E_{\lambda_{2}}(t,t_{0})}{2}
+\frac{ay_{0}+2y_{0}^{\alpha}}{\sqrt{a^{2}-4b}}
\frac{E_{\lambda_{2}}(t,t_{0})-E_{\lambda_{1}}(t,t_{0})}{2},
\quad t\in\mathbb{T}^{\kappa^{2}}_{[t_{0},\infty)}.
$$
\end{theorem}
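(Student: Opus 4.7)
The plan is to establish the result in three stages, all resting on property (v) of Theorem~\ref{lem1} (namely $\Delta^{\alpha}E_{\lambda}(t,t_{0})=\lambda E_{\lambda}(t,t_{0})$) and on the general-solution theorem proved just before. First, I would verify that both $E_{\lambda_{1}}(\cdot,t_{0})$ and $E_{\lambda_{2}}(\cdot,t_{0})$ solve \eqref{eq14}. This is immediate from the computation displayed just before the theorem: substituting $y=E_{\lambda}(\cdot,t_{0})$ into the left-hand side of \eqref{eq14} produces $(\lambda^{2}+a\lambda+b)E_{\lambda}(t,t_{0})$, which vanishes precisely when $\lambda$ satisfies \eqref{eq15}. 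Regressivity of \eqref{eq14}, together with the condition $a^{2}-4b\neq 0$, translates into $1+\mu(t)\frac{\lambda_{i}-\kappa_{1}(\alpha,t)}{\kappa_{0}(\alpha,t)}\neq 0$ for $i=1,2$, so both exponentials $E_{\lambda_{i}}$ are well-defined, as the paragraph after \eqref{eq16} already records.

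Second, I would show that this pair is a fundamental system by computing the Wronskian directly. Applying property (v) of Theorem~\ref{lem1} and expanding the $2\times 2$ determinant gives
\begin{equation*}
W(E_{\lambda_{1}},E_{\lambda_{2}})(t)
=\lambda_{2}E_{\lambda_{1}}(t,t_{0})E_{\lambda_{2}}(t,t_{0})
-\lambda_{1}E_{\lambda_{1}}(t,t_{0})E_{\lambda_{2}}(t,t_{0})
=(\lambda_{2}-\lambda_{1})E_{\lambda_{1}}(t,t_{0})E_{\lambda_{2}}(t,t_{0}).
\end{equation*}
From \eqref{eq16} we have $\lambda_{2}-\lambda_{1}=\sqrt{a^{2}-4b}\neq 0$, and time-scale exponentials are nowhere zero, so $W\neq 0$ on $\mathbb{T}^{\kappa^{2}}_{[t_{0},\infty)}$. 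By the definition of fundamental system, this settles the first assertion.

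Third, for the explicit IVP solution I would invoke the previous theorem to write the solution in the form $y(t)=pE_{\lambda_{1}}(t,t_{0})+qE_{\lambda_{2}}(t,t_{0})$ and fit $p,q$ to the initial data. Using $E_{\lambda_{i}}(t_{0},t_{0})=1$ and $\Delta^{\alpha}E_{\lambda_{i}}(t_{0},t_{0})=\lambda_{i}$, the initial conditions $y(t_{0})=y_{0}$ and $\Delta^{\alpha}y(t_{0})=y_{0}^{\alpha}$ reduce to the linear system
\begin{equation*}
p+q=y_{0},\qquad \lambda_{1}p+\lambda_{2}q=y_{0}^{\alpha},
\end{equation*}
which Cramer's rule solves as $p=(\lambda_{2}y_{0}-y_{0}^{\alpha})/(\lambda_{2}-\lambda_{1})$ and $q=(y_{0}^{\alpha}-\lambda_{1}y_{0})/(\lambda_{2}-\lambda_{1})$.

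The only non-routine step is the final algebraic regrouping: substituting $p$ and $q$ into $pE_{\lambda_{1}}+qE_{\lambda_{2}}$ and splitting along the symmetric and antisymmetric parts $\tfrac{1}{2}(E_{\lambda_{1}}\pm E_{\lambda_{2}})$ using the Vieta relations $\lambda_{1}+\lambda_{2}=-a$ and $\lambda_{2}-\lambda_{1}=\sqrt{a^{2}-4b}$. This is bookkeeping rather than a conceptual obstacle, and it reproduces the stated closed form, thereby completing the proof.
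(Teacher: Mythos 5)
Your proposal is correct and follows essentially the same route as the paper's own proof: verify that both exponentials solve \eqref{eq14} via the characteristic equation, show they form a fundamental system by computing $W=(\lambda_{2}-\lambda_{1})E_{\lambda_{1}}E_{\lambda_{2}}=\sqrt{a^{2}-4b}\,E_{\lambda_{1}}(t,t_0)E_{\lambda_{2}}(t,t_0)\neq 0$, and determine the coefficients from the $2\times 2$ linear system at $t_{0}$, where your Cramer's-rule values $p,q$ coincide with the paper's $c_{1},c_{2}$. Your final regrouping also reveals that the closed form displayed in the theorem statement is missing a factor $y_{0}$ on the term $\frac{E_{\lambda_{1}}(t,t_{0})+E_{\lambda_{2}}(t,t_{0})}{2}$, a typo rather than a flaw in your argument.
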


\begin{proof}
Since $\lambda_{1}$ and $\lambda_{2}$, given by \eqref{eq16},
are solutions of the characteristic equation \eqref{eq15},
we know that both $E_{\lambda_{1}}(\cdot,t_{0})$
and $E_{\lambda_{2}}(\cdot,t_{0})$ are solutions of \eqref{eq14}.
Moreover,
\begin{equation*}
\begin{split}
W\left( E_{\lambda_{1}}(t,t_{0}), E_{\lambda_{2}}(t,t_{0})\right)
&= \det
\left(
\begin{array}{ccc}
E_{\lambda_{1}}(t,t_{0})& E_{\lambda_{2}}(t,t_{0})\\
\lambda_{1}E_{\lambda_{1}}(t,t_{0}) & \lambda_{2}E_{\lambda_{2}}(t,t_{0})\\
\end{array}
\right)\\
&=\lambda_{2}E_{\lambda_{1}}(t,t_{0}) E_{\lambda_{2}}(t,t_{0}) 
-\lambda_{1}E_{\lambda_{1}}(t,t_{0}) E_{\lambda_{2}}(t,t_{0})\\
&=(\lambda_{2}-\lambda_{1} )E_{\lambda_{1}}(t,t_{0}) E_{\lambda_{2}}(t,t_{0})\\
&=\sqrt{a^{2}-4b}E_{\lambda_{1}}(t,t_{0}) E_{\lambda_{2}}(t,t_{0}),
\end{split}
\end{equation*}
so that $W( E_{\lambda_{1}}(t,t_{0}) E_{\lambda_{2}}(t,t_{0}))\neq 0$
for all $t\in\mathbb{T}^{\kappa}_{[t_{0},\infty)}$, unless $a^{2}-4b=0$.
Having obtained a fundamental system $y_{1}=E_{\lambda_{1}}(\cdot,t_{0})$
and $y_{2}= E_{\lambda_{2}}(\cdot,t_{0})$ of \eqref{eq14},
now we obtain a solution of \eqref{eq17}, namely $y(t)=c_{1}y_{1}(t)+c_{2}y_{2}(t)$.
For that we solve the linear system of equations
\begin{equation*}
\begin{cases}
y_{0}=c_{1}y_{1}(t_{0})+c_{2}y_{2}(t_{0})\\
\Delta^{\alpha}y(t_{0})=\lambda_{1}c_{1}y_{1}(t_{0})+\lambda_{2}c_{2}y_{2}(t_{0})
\end{cases}
\end{equation*}
in the unknowns $c_{1}$ and $c_{2}$, obtaining
$c_{1}=\frac{y_{0}}{2}-\frac{a y_{0}+2y_{0}^{\alpha}}{2\sqrt{a^{2}-4b}}$
and $c_{2}=\frac{y_{0}}{2}+\frac{a y_{0}+2y_{0}^{\alpha}}{2\sqrt{a^{2}-4b}}$.
\end{proof}

Hyperbolic functions are associated with the case $a=0$ and $b<0$.

\begin{definition}[Hyperbolic functions]
\label{def:hf}
Let $\mathbb{T}$ be a time scale and $t_{0}\in\mathbb{T}$.
If $p\in C_{rd}$ and $\kappa^{2}_{0}
-2\mu\kappa_{0}\kappa_{1}+\mu^{2}(-p^{2}+\kappa^{2}_{1})\neq 0$
for all $t\in \mathbb{T}^{\kappa}$, then we define
the hyperbolic functions $\cosh_{p\Delta^{\alpha}}(\cdot,t_{0})$
and $\sinh_{p\Delta^{\alpha}}(\cdot,t_{0})$ on $\mathbb{T}_{[t_{0},\infty)}$ by
$$
\cosh_{p\Delta^{\alpha}}(\cdot,t_{0})=\frac{E_{p}(\cdot,t_{0})+E_{-p}(\cdot,t_{0})}{2}
\ \text{ and } \
\sinh_{p\Delta^{\alpha}}(\cdot,t_{0})=\frac{E_{p}(\cdot,t_{0})-E_{-p}(\cdot,t_{0})}{2}.
$$
\end{definition}

\begin{remark}
The condition
$\kappa^{2}_{0}-2\mu\kappa_{0}\kappa_{1}+\mu^{2}(-p^{2}+\kappa^{2}_{1})\neq 0$
of Definition~\ref{def:hf} is equivalent to
$1+\mu(t) \frac{p(t)-\kappa_{1}(\alpha,t)}{\kappa_{0}(\alpha,t)}\neq 0$
and $1-\mu(t)\frac{p(t)+\kappa_{1}(\alpha,t)}{\kappa_{0}(\alpha,t)} \neq 0$.
\end{remark}

\begin{lemma}
\label{lemma:cosh:sinh}
Let $\kappa^{2}_{0}-2\mu\kappa_{0}\kappa_{1}+\mu^{2}(-p^{2}+\kappa^{2}_{1})\neq0$
for all $t\in\mathbb{T}^{\kappa}$. Then,
\begin{gather*}
\Delta^{\alpha}\cosh_{p\Delta^{\alpha}}(\cdot,t_{0})=p\sinh_{p\Delta^{\alpha}}(\cdot,t_{0}),\\ \Delta^{\alpha}\sinh_{p\Delta^{\alpha}}(\cdot,t_{0})=p\cosh_{p\Delta^{\alpha}}(\cdot,t_{0}),\\
\cosh^{2}_{p\Delta^{\alpha}}(\cdot,t_{0})
-\sinh^{2}_{p\Delta^{\alpha}}(\cdot,t_{0})
=E_{p}(\cdot,t_{0})E_{-p}(\cdot,t_{0}),
\end{gather*}
for all $t\in \mathbb{T}_{[t_{0},\infty)}$.
\end{lemma}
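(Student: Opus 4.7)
The plan is to reduce each of the three identities directly to properties already established in Theorem~\ref{lem1}, together with Definition~\ref{def:hf}. First I would verify the preparatory point stated in the remark preceding the lemma: the hypothesis
\[
\kappa^{2}_{0}-2\mu\kappa_{0}\kappa_{1}+\mu^{2}(-p^{2}+\kappa^{2}_{1})\neq 0
\]
is equivalent to $1+\mu(t)(p(t)-\kappa_{1}(\alpha,t))/\kappa_{0}(\alpha,t)\neq 0$ together with $1-\mu(t)(p(t)+\kappa_{1}(\alpha,t))/\kappa_{0}(\alpha,t)\neq 0$, so that both $E_{p}(\cdot,t_{0})$ and $E_{-p}(\cdot,t_{0})$ exist and item~(v) of Theorem~\ref{lem1} is applicable with coefficient $\pm p$.

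For the first derivative formula, I apply linearity (item~(i) of Theorem~\ref{lem1}) to Definition~\ref{def:hf} and then substitute item~(v) of Theorem~\ref{lem1}:
\[
\Delta^{\alpha}\cosh_{p\Delta^{\alpha}}(\cdot,t_{0})
=\tfrac{1}{2}\bigl(\Delta^{\alpha}E_{p}(\cdot,t_{0})+\Delta^{\alpha}E_{-p}(\cdot,t_{0})\bigr)
=\tfrac{p}{2}\bigl(E_{p}(\cdot,t_{0})-E_{-p}(\cdot,t_{0})\bigr)
=p\sinh_{p\Delta^{\alpha}}(\cdot,t_{0}).
\]
The derivative of $\sinh_{p\Delta^{\alpha}}$ is obtained identically up to the sign in Definition~\ref{def:hf}, giving $p\cosh_{p\Delta^{\alpha}}(\cdot,t_{0})$.

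For the Pythagorean-style identity I would simply factor
\[
\cosh^{2}_{p\Delta^{\alpha}}(\cdot,t_{0})-\sinh^{2}_{p\Delta^{\alpha}}(\cdot,t_{0})
=\bigl(\cosh_{p\Delta^{\alpha}}(\cdot,t_{0})-\sinh_{p\Delta^{\alpha}}(\cdot,t_{0})\bigr)
\bigl(\cosh_{p\Delta^{\alpha}}(\cdot,t_{0})+\sinh_{p\Delta^{\alpha}}(\cdot,t_{0})\bigr),
\]
and read off from Definition~\ref{def:hf} that the two factors are $E_{-p}(\cdot,t_{0})$ and $E_{p}(\cdot,t_{0})$, respectively.

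There is no genuine obstacle here; the only point requiring care is the bookkeeping of the regressivity condition at the outset, which has already been handled by the remark immediately preceding the lemma. Once the formula $\Delta^{\alpha}E_{q}=qE_{q}$ and linearity of $\Delta^{\alpha}$ are in hand, the three identities are purely algebraic consequences of Definition~\ref{def:hf}, exactly as in the classical case $\mathbb{T}=\mathbb{R}$.
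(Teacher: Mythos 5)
Your proof is correct and follows essentially the same route as the paper: the first two identities come from linearity of $\Delta^{\alpha}$ together with $\Delta^{\alpha}E_{\pm p}=\pm p\,E_{\pm p}$ (items (i) and (v) of Theorem~\ref{lem1}), and the third is elementary algebra on Definition~\ref{def:hf}. The only cosmetic difference is that you factor the difference of squares as $(\cosh-\sinh)(\cosh+\sinh)=E_{-p}E_{p}$ while the paper expands both squares and cancels; the content is identical.
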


\begin{proof}
The first two formulas are trivially verified. The last relation follows from
\begin{equation*}
\begin{split}
(\cosh^{2}_{p\Delta^{\alpha}}&-\sinh^{2}_{p\Delta^{\alpha}})(\cdot,t_{0})\\
&= \left(\frac{E_{p}(\cdot,t_{0})+E_{-p}(\cdot,t_{0})}{2}\right)^{2}
-\left(\frac{E_{p}(\cdot,t_{0})-E_{-p}(\cdot,t_{0})}{2}\right)^{2}\\
&= \frac{E^{2}_{p}(\cdot,t_{0})+2E_{p}(\cdot,t_{0})E_{-p}(\cdot,t_{0})
+E^{2}_{-p}(\cdot,t_{0})}{4}\\
&\qquad -\frac{E^{2}_{p}(\cdot,t_{0})
-2E_{p}(\cdot,t_{0}) E_{-p}(\cdot,t_{0})
+E^{2}_{-p}(\cdot,t_{0})}{4}\\
&= E_{p}(\cdot,t_{0})E_{-p}(\cdot,t_{0})
\end{split}
\end{equation*}
for all $t\in \mathbb{T}_{[t_{0},\infty)}$.
\end{proof}

\begin{theorem}
If $\gamma\in\mathbb{R}\backslash \{0\}$,
$\kappa^{2}_{0}-2\mu\kappa_{0}\kappa_{1}+\mu^{2}(-\gamma^{2}+\kappa^{2}_{1})\neq 0$,
and $t_{0}\in\mathbb{T}^{\kappa}$, then
$$
y(t)=c_{1}\cosh_{\gamma\Delta^{\alpha}}(t,t_{0})+c_{2}\sinh_{\gamma\Delta^{\alpha}}(t,t_{0})
$$
is a general solution of
\begin{equation}
\label{eq18}
\Delta^{\alpha}\Delta^{\alpha}y-\gamma^{2}y=0
\end{equation}
on $t\in\mathbb{T}^{\kappa^{2}}_{[t_{0},\infty)}$,
where $c_{1}$ and $c_{2}$ are arbitrary constants.
\end{theorem}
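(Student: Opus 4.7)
The plan is to recognize equation \eqref{eq18} as the special case $a=0$, $b=-\gamma^{2}$ of the homogeneous equation \eqref{eq14} and then apply the preceding theorem, followed by a change of fundamental basis. First I would verify that the hypothesis $\kappa^{2}_{0}-2\mu\kappa_{0}\kappa_{1}+\mu^{2}(-\gamma^{2}+\kappa^{2}_{1})\neq 0$ coincides, under $a=0$ and $b=-\gamma^{2}$, with the regressivity condition needed for \eqref{eq14}, and also matches the hypothesis in Definition~\ref{def:hf} with $p\equiv\gamma$, so that $\cosh_{\gamma\Delta^{\alpha}}(\cdot,t_{0})$ and $\sinh_{\gamma\Delta^{\alpha}}(\cdot,t_{0})$ are well defined.

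Next I would solve the characteristic equation \eqref{eq15}, which reduces to $\lambda^{2}=\gamma^{2}$, yielding the two distinct roots $\lambda_{1}=-\gamma$ and $\lambda_{2}=\gamma$ (distinct since $\gamma\neq 0$, and the discriminant $a^{2}-4b=4\gamma^{2}\neq 0$). By the theorem preceding Definition~\ref{def:hf}, $E_{\gamma}(\cdot,t_{0})$ and $E_{-\gamma}(\cdot,t_{0})$ then form a fundamental system of solutions of \eqref{eq18}.

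From Definition~\ref{def:hf}, $\cosh_{\gamma\Delta^{\alpha}}$ and $\sinh_{\gamma\Delta^{\alpha}}$ are linear combinations of $E_{\gamma}(\cdot,t_{0})$ and $E_{-\gamma}(\cdot,t_{0})$ with change-of-basis matrix $\tfrac{1}{2}\begin{pmatrix}1&1\\1&-1\end{pmatrix}$, which is invertible. By linearity of $L_{2\Delta^{\alpha}}$, they are both solutions of \eqref{eq18}; this can also be checked directly from Lemma~\ref{lemma:cosh:sinh}, since $\Delta^{\alpha}\Delta^{\alpha}\cosh_{\gamma\Delta^{\alpha}}=\gamma\Delta^{\alpha}\sinh_{\gamma\Delta^{\alpha}}=\gamma^{2}\cosh_{\gamma\Delta^{\alpha}}$, and analogously for $\sinh_{\gamma\Delta^{\alpha}}$.

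It remains to check that the pair forms a fundamental set, that is, the Wronskian does not vanish. Using Lemma~\ref{lemma:cosh:sinh}, I would compute
\begin{equation*}
W\bigl(\cosh_{\gamma\Delta^{\alpha}}(\cdot,t_{0}),\sinh_{\gamma\Delta^{\alpha}}(\cdot,t_{0})\bigr)
=\gamma\bigl(\cosh^{2}_{\gamma\Delta^{\alpha}}-\sinh^{2}_{\gamma\Delta^{\alpha}}\bigr)(\cdot,t_{0})
=\gamma\,E_{\gamma}(\cdot,t_{0})E_{-\gamma}(\cdot,t_{0}),
\end{equation*}
which is nonzero since $\gamma\neq 0$ and the conformable exponentials are nonvanishing. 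Then an application of the general representation \eqref{eq:gen:sol} yields that $y(t)=c_{1}\cosh_{\gamma\Delta^{\alpha}}(t,t_{0})+c_{2}\sinh_{\gamma\Delta^{\alpha}}(t,t_{0})$ is the general solution. The only non-routine point is the Wronskian computation; this is handled cleanly by Lemma~\ref{lemma:cosh:sinh} and by using $\Delta^{\alpha}$-linearity, so I do not expect any real obstacle.
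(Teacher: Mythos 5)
Your proposal is correct and follows essentially the same route as the paper: verify that $\cosh_{\gamma\Delta^{\alpha}}$ and $\sinh_{\gamma\Delta^{\alpha}}$ solve the equation (which the paper leaves as ``easily proved'' and you justify via Lemma~\ref{lemma:cosh:sinh} and linearity), then show they form a fundamental system by computing the Wronskian as $\gamma\,E_{\gamma}(\cdot,t_{0})E_{-\gamma}(\cdot,t_{0})\neq 0$ using the hyperbolic identity. The preliminary reduction to the characteristic-equation theorem and the change-of-basis remark are harmless extra scaffolding, but the decisive step is identical to the paper's.
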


\begin{proof}
One can easily prove that $\cosh_{\gamma\Delta^{\alpha}}(\cdot,t_{0})$
and $\sinh_{\gamma\Delta^{\alpha}}(\cdot,t_{0})$
are solutions of \eqref{eq18}. We prove that
they form a fundamental set of solutions for \eqref{eq18}:
\begin{equation*}
\begin{split}
W\left(\cosh_{\gamma\Delta^{\alpha}}(t,t_{0}),
\sinh_{\gamma\Delta^{\alpha}}(t,t_{0})\right)
&=\det
\left(
\begin{array}{ccc}
\cosh_{\gamma\Delta^{\alpha}}(t,t_{0})
& \sinh_{\gamma\Delta^{\alpha}}(t,t_{0}))\\
\gamma\sinh_{\gamma\Delta^{\alpha}}(t,t_{0}))
& \gamma\cosh_{\gamma\Delta^{\alpha}}(t,t_{0})\\
\end{array}
\right)\\
&= \gamma\cosh^{2}_{\gamma\Delta^{\alpha}}(t,t_{0})
-\gamma\sinh^{2}_{\gamma\Delta^{\alpha}}(t,t_{0})\\
&=\gamma \left(\cosh^{2}_{\gamma\Delta^{\alpha}}(t,t_{0})
-\sinh^{2}_{\gamma\Delta^{\alpha}}(t,t_{0})\right)\\
&=\gamma E_{\gamma}(t,t_{0}) E_{-\gamma}(t,t_{0})
\end{split}
\end{equation*}
is different from zero for all $t\in\mathbb{T}^{\kappa}_{[t_{0},\infty)}$,
unless $\gamma=0$.
\end{proof}

\begin{example}
Let $\mathbb{T}$ be a time scale, $t_{0}\in\mathbb{T}^{\kappa}$.
If $\kappa^{2}_{0}-2\mu\kappa_{0}\kappa_{1}+\mu^{2}(-\gamma^{2}+\kappa^{2}_{1})\neq0$,
with $\gamma\in\mathbb{R}\backslash \{0\}$, then the solution of the initial value problem
$$
\Delta^{\alpha}\Delta^{\alpha}y(t)-\gamma^{2}y(t)=0,
\quad y(t_{0})=y_{0},
\quad \Delta^{\alpha}y(t_{0})=y_{0}^{\alpha},
$$
is given by
$$
y(t)=y_{0}\cosh_{\gamma\Delta^{\alpha}}(t,t_{0})
+\frac{y_{0}^{\alpha}}{\gamma}\sinh_{\gamma\Delta^{\alpha}}(t,t_{0})
$$
for all $t\in\mathbb{T}^{\kappa^{2}}_{[t_{0},\infty)}$.
\end{example}

\begin{definition}[Trigonometric functions]
\label{def:tf}
Let $\mathbb{T}$ be a time scale, $p\in C_{rd}$, $t_{0}\in\mathbb{T}$,
and $\kappa^{2}_{0}-2\mu\kappa_{0}\kappa_{1}+\mu^{2}(p^{2}+\kappa^{2}_{1})\neq0$
for all $t\in \mathbb{T}^{\kappa}$. Then we define the trigonometric functions
$\cos_{p\Delta^{\alpha}}(\cdot,t_{0})$ and $\sin_{p\Delta^{\alpha}}(\cdot,t_{0})$ by
$$
\cos_{p\Delta^{\alpha}}(\cdot,t_{0})=\frac{E_{ip}(\cdot,t_{0})+E_{-ip}(\cdot,t_{0})}{2}
\ \text{ and } \
\sin_{p\Delta^{\alpha}}(\cdot,t_{0})=\frac{E_{ip}(\cdot,t_{0})-E_{-ip}(\cdot,t_{0})}{2i}.
$$
\end{definition}

\begin{remark}
The condition
$\kappa^{2}_{0}-2\mu\kappa_{0}\kappa_{1}+\mu^{2}(p^{2}+\kappa^{2}_{1})\neq0$
of Definition~\ref{def:tf} is equivalent to
$1+\mu(t)\left(\frac{ip(t)-\kappa_{1}(\alpha,t)}{\kappa_{0}(\alpha,t)}\right)\neq 0$
and $1-\mu(t)\frac{i p(t)+\kappa_{1}(\alpha,t)}{\kappa_{0}(\alpha,t)}\neq 0$.
\end{remark}

\begin{lemma}
Let $p\in C_{rd}$. If
$\kappa^{2}_{0}-2\mu\kappa_{0}\kappa_{1}+\mu^{2}(p^{2}+\kappa^{2}_{1})\neq 0$
for all $t\in \mathbb{T}^{\kappa}$, then
\begin{gather*}
\Delta^{\alpha}\cos_{p\Delta^{\alpha}}(\cdot,t_{0})=-p\sin_{p\Delta^{\alpha}}(\cdot,t_{0}),\\
\Delta^{\alpha}\sin_{p\Delta^{\alpha}}(\cdot,t_{0})=p\cos_{p\Delta^{\alpha}}(\cdot,t_{0}),\\
\cos^{2}_{p\Delta^{\alpha}}(\cdot,t_{0})+\sin^{2}_{p\Delta^{\alpha}}(\cdot,t_{0})
=E_{ip}(\cdot,t_{0})E_{-ip}(\cdot,t_{0}).
\end{gather*}
\end{lemma}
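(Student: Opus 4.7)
The plan is to mimic the proof of Lemma~\ref{lemma:cosh:sinh} by exploiting the linearity of $\Delta^{\alpha}$ (item (i) of Theorem~\ref{lem1}) together with the eigenvalue-like identity $\Delta^{\alpha}E_{\lambda}(\cdot,t_{0})=\lambda E_{\lambda}(\cdot,t_{0})$ (item (v) of Theorem~\ref{lem1}), applied with $\lambda=\pm ip$. The hypothesis $\kappa_{0}^{2}-2\mu\kappa_{0}\kappa_{1}+\mu^{2}(p^{2}+\kappa_{1}^{2})\neq 0$ is, by the Remark following Definition~\ref{def:tf}, exactly what guarantees that $E_{ip}(\cdot,t_{0})$ and $E_{-ip}(\cdot,t_{0})$ are well defined, so the invocation of (v) is legitimate.

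First, for the derivative of $\cos_{p\Delta^{\alpha}}$, linearity yields
\[
\Delta^{\alpha}\cos_{p\Delta^{\alpha}}(\cdot,t_{0})
=\tfrac{1}{2}\bigl(\Delta^{\alpha}E_{ip}(\cdot,t_{0})+\Delta^{\alpha}E_{-ip}(\cdot,t_{0})\bigr)
=\tfrac{1}{2}\bigl(ipE_{ip}(\cdot,t_{0})-ipE_{-ip}(\cdot,t_{0})\bigr),
\]
which, after factoring $ip$ and comparing with the definition of $\sin_{p\Delta^{\alpha}}$, equals $-p\sin_{p\Delta^{\alpha}}(\cdot,t_{0})$ (since $ip\cdot\frac{1}{2i}=\frac{p}{2}$ gives the right sign upon pulling out the minus). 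An entirely analogous one-line computation gives $\Delta^{\alpha}\sin_{p\Delta^{\alpha}}(\cdot,t_{0})=p\cos_{p\Delta^{\alpha}}(\cdot,t_{0})$.

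For the Pythagorean-type identity, I would expand the squares directly as in the hyperbolic case, but keeping track of the factor $\frac{1}{2i}$ in the sine. Writing $A=E_{ip}(\cdot,t_{0})$ and $B=E_{-ip}(\cdot,t_{0})$ for brevity,
\[
\cos^{2}_{p\Delta^{\alpha}}+\sin^{2}_{p\Delta^{\alpha}}
=\left(\frac{A+B}{2}\right)^{2}+\left(\frac{A-B}{2i}\right)^{2}
=\frac{A^{2}+2AB+B^{2}}{4}-\frac{A^{2}-2AB+B^{2}}{4},
\]
where the sign flip in the second fraction comes from $(2i)^{2}=-4$. The $A^{2}$ and $B^{2}$ terms cancel and the cross terms add to give $AB=E_{ip}(\cdot,t_{0})E_{-ip}(\cdot,t_{0})$, as required.

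No serious obstacle is anticipated; the only subtle point is book-keeping the factors of $i$ and signs in the sine identity, so that one does not lose a minus sign when computing $\Delta^{\alpha}\cos_{p\Delta^{\alpha}}$ and comparing to the definition of $\sin_{p\Delta^{\alpha}}$.
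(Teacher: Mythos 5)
Your proposal is correct and follows essentially the same route as the paper: the first two identities via linearity and the relation $\Delta^{\alpha}E_{\pm ip}(\cdot,t_{0})=\pm ip\,E_{\pm ip}(\cdot,t_{0})$ from Theorem~\ref{lem1}(v), and the Pythagorean-type identity by direct expansion of the squares, with the sign flip coming from $(2i)^{2}=-4$. The factor-of-$i$ bookkeeping in your first computation, $\frac{ip}{2}\left(E_{ip}-E_{-ip}\right)=ip\cdot i\sin_{p\Delta^{\alpha}}(\cdot,t_{0})=-p\sin_{p\Delta^{\alpha}}(\cdot,t_{0})$, checks out.
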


\begin{remark}
If $\alpha=1$, then
$E_{ip}(\cdot,t_{0})=e_{ip}(\cdot,t_{0})=\cos_{p}(\cdot,t_{0})+i\sin_{p}(\cdot,t_{0})$.
\end{remark}

\begin{proof}
Similarly to Lemma~\ref{lemma:cosh:sinh}, the first two formulas
are easily verified. We have
\begin{equation*}
\begin{split}
\cos^{2}_{p\Delta^{\alpha}}&(\cdot,t_{0})+\sin^{2}_{p\Delta^{\alpha}}(\cdot,t_{0})\\
&= \left(\frac{E_{ip}(\cdot,t_{0})+E_{-ip}(\cdot,t_{0})}{2}\right)^{2}
+\left(\frac{E_{ip}(\cdot,t_{0})-E_{-ip}(\cdot,t_{0})}{2i}\right)^{2}\\
&=\frac{E^{2}_{ip}(\cdot,t_{0})+2E_{ip}(\cdot,t_{0})E_{-ip}(\cdot,t_{0})
+E^{2}_{-ip}(\cdot,t_{0})}{4}\\
&\qquad -\frac{E^{2}_{ip}(\cdot,t_{0})-2E_{ip}(\cdot,t_{0})E_{-ip}(\cdot,t_{0})
+E^{2}_{-ip}(\cdot,t_{0})}{4}\\
&=E_{ip}(\cdot,t_{0})E_{-ip}(\cdot,t_{0})
\end{split}
\end{equation*}
and the last relation also holds.
\end{proof}

\begin{example}
Let $\mathbb{T}=\mathbb{R}$, $\gamma\in\mathbb{R}$, and $t_{0}\in\mathbb{T}$.
Then, the conformable trigonometric functions cosine and sine are given by
\begin{equation*}
\begin{split}
\cos_{\gamma\Delta^{\alpha}}(t,t_{0})
&=\frac{E_{i\gamma}(t,t_{0})+E_{-i\gamma}(t,t_{0})}{2}\\
&=\frac{e^{\int_{t_{0}}^{t}\frac{i\gamma-\kappa_{1}(s,t_{0})}{\kappa_{0}(s,t_{0})}ds}
+e^{\int_{t_{0}}^{t}\frac{-i\gamma-\kappa_{1}(s,t_{0})}{\kappa_{0}(s,t_{0})}ds}}{2}\\
&=\frac{e^{i\int_{t_{0}}^{t}\frac{\gamma}{\kappa_{0}(s,t_{0})}ds}
e^{-\int_{t_{0}}^{t}\frac{\kappa_{1}(s,t_{0})}{\kappa_{0}(s,t_{0})}ds}
+e^{-i\int_{t_{0}}^{t}\frac{\gamma}{\kappa_{0}(s,t_{0})}ds}
e^{-\int_{t_{0}}^{t}\frac{\kappa_{1}(s,t_{0})}{\kappa_{0}(s,t_{0})}ds}}{2}\\
&=\frac{e^{-\int_{t_{0}}^{t}\frac{\kappa_{1}(s,t_{0})}{\kappa_{0}(s,t_{0})}ds}
\left(2\cos(\int_{t_{0}}^{t}\frac{\gamma}{\kappa_{0}(s,t_{0})}ds )\right)}{2}\\
&=e^{-\int_{t_{0}}^{t}\frac{\kappa_{1}(s,t_{0})}{\kappa_{0}(s,t_{0})}ds}
\cos\left(\int_{t_{0}}^{t}\frac{\gamma}{\kappa_{0}(s,t_{0})}ds\right)
\end{split}
\end{equation*}
and
\begin{equation*}
\begin{split}
\sin_{\gamma\Delta^{\alpha}}(t,t_{0})
&=\frac{E_{i\gamma}(t,t_{0})-E_{-i\gamma}(t,t_{0})}{2i}\\
&=\frac{e^{\int_{t_{0}}^{t}\frac{i\gamma-\kappa_{1}(s,t_{0})}{\kappa_{0}(s,t_{0})}ds}
-e^{\int_{t_{0}}^{t}\frac{-i\gamma-\kappa_{1}(s,t_{0})}{\kappa_{0}(s,t_{0})}ds}}{2i}\\
&=\frac{e^{i\int_{t_{0}}^{t}\frac{\gamma}{\kappa_{0}(s,t_{0})}ds}
e^{-\int_{t_{0}}^{t}\frac{\kappa_{1}(s,t_{0})}{\kappa_{0}(s,t_{0})}ds}
-e^{-i\int_{t_{0}}^{t}\frac{\gamma}{\kappa_{0}(s,t_{0})}ds}
e^{-\int_{t_{0}}^{t}\frac{\kappa_{1}(s,t_{0})}{\kappa_{0}(s,t_{0})}ds}}{2i}\\
&=e^{-\int_{t_{0}}^{t}\frac{\kappa_{1}(s,t_{0})}{\kappa_{0}(s,t_{0})}ds}
\sin\left(\int_{t_{0}}^{t}\frac{\gamma}{\kappa_{0}(s,t_{0})}ds\right).
\end{split}
\end{equation*}
\end{example}

\begin{theorem}
Let $\mathbb{T}$ be a time scale and $t_{0}\in\mathbb{T}^{\kappa}$.
If $\kappa^{2}_{0}-2\mu\kappa_{0}\kappa_{1}+\mu^{2}(\gamma^{2}+\kappa^{2}_{1})\neq0$,
$\gamma\in\mathbb{R}\backslash \{0\}$, then
$y(t)=c_{1}\cos_{\gamma\Delta^{\alpha}}(t,t_{0})+c_{2}\sin_{\gamma\Delta^{\alpha}}(t,t_{0})$
is a general solution of
\begin{equation}
\label{eq19}
\Delta^{\alpha}\Delta^{\alpha}y+\gamma^{2}y=0,
\quad t\in\mathbb{T}^{\kappa^{2}}.
\end{equation}
\end{theorem}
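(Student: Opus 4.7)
The plan is to mirror the argument used for the hyperbolic case: verify that $\cos_{\gamma\Delta^{\alpha}}(\cdot,t_{0})$ and $\sin_{\gamma\Delta^{\alpha}}(\cdot,t_{0})$ each satisfy \eqref{eq19}, compute their Wronskian, and show it does not vanish, so that they form a fundamental system and the general-solution theorem already proved above produces the claimed combination.

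First I would verify the regressivity hypothesis needed to apply the preceding lemma. The assumption $\kappa^{2}_{0}-2\mu\kappa_{0}\kappa_{1}+\mu^{2}(\gamma^{2}+\kappa^{2}_{1})\neq 0$ is, by the remark following Definition~\ref{def:tf}, equivalent to $1+\mu(t)\frac{i\gamma-\kappa_{1}(\alpha,t)}{\kappa_{0}(\alpha,t)}\neq 0$ and $1-\mu(t)\frac{i\gamma+\kappa_{1}(\alpha,t)}{\kappa_{0}(\alpha,t)}\neq 0$, so the conformable exponentials $E_{i\gamma}(\cdot,t_{0})$ and $E_{-i\gamma}(\cdot,t_{0})$ are well defined and never vanish, and in particular $\cos_{\gamma\Delta^{\alpha}}$ and $\sin_{\gamma\Delta^{\alpha}}$ are $\Delta^{\alpha}$-differentiable.

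Next I would apply the previous lemma on trigonometric functions twice. Since $\Delta^{\alpha}\cos_{\gamma\Delta^{\alpha}}(\cdot,t_{0})=-\gamma\sin_{\gamma\Delta^{\alpha}}(\cdot,t_{0})$, iterating gives $\Delta^{\alpha}\Delta^{\alpha}\cos_{\gamma\Delta^{\alpha}}=-\gamma\,\Delta^{\alpha}\sin_{\gamma\Delta^{\alpha}}=-\gamma^{2}\cos_{\gamma\Delta^{\alpha}}$, so $\cos_{\gamma\Delta^{\alpha}}$ solves \eqref{eq19}. The identical argument with the roles reversed handles $\sin_{\gamma\Delta^{\alpha}}$. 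This is essentially a one-line calculation once the lemma is invoked.

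The main verification is then the Wronskian. Using again the derivative formulas from the lemma,
\begin{equation*}
W\!\left(\cos_{\gamma\Delta^{\alpha}}(t,t_{0}),\sin_{\gamma\Delta^{\alpha}}(t,t_{0})\right)
=\det\!\left(\begin{array}{cc}
\cos_{\gamma\Delta^{\alpha}}(t,t_{0}) & \sin_{\gamma\Delta^{\alpha}}(t,t_{0})\\
-\gamma\sin_{\gamma\Delta^{\alpha}}(t,t_{0}) & \gamma\cos_{\gamma\Delta^{\alpha}}(t,t_{0})
\end{array}\right),
\end{equation*}
which equals $\gamma\bigl(\cos^{2}_{\gamma\Delta^{\alpha}}+\sin^{2}_{\gamma\Delta^{\alpha}}\bigr)(t,t_{0})=\gamma\,E_{i\gamma}(t,t_{0})E_{-i\gamma}(t,t_{0})$ by the Pythagorean identity in the lemma. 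Since $\gamma\neq 0$ and the conformable exponentials are nowhere zero (as argued in the first step), this Wronskian is nonzero on $\mathbb{T}^{\kappa}_{[t_{0},\infty)}$.

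Finally I would invoke the general-solution theorem already established for fundamental systems of $L_{2\Delta^{\alpha}}y=0$: since the pair $\{\cos_{\gamma\Delta^{\alpha}}(\cdot,t_{0}),\sin_{\gamma\Delta^{\alpha}}(\cdot,t_{0})\}$ forms a fundamental set of solutions of \eqref{eq19}, every solution is of the form $c_{1}\cos_{\gamma\Delta^{\alpha}}(t,t_{0})+c_{2}\sin_{\gamma\Delta^{\alpha}}(t,t_{0})$ with $c_{1},c_{2}\in\mathbb{R}$. The only delicate point I foresee is confirming the non-vanishing of $E_{\pm i\gamma}$ from the stated regressivity condition; apart from that, the proof is a direct assembly of the lemma and the fundamental-system theorem.
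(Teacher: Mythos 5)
Your proposal is correct and follows essentially the same route as the paper's own proof: verify that $\cos_{\gamma\Delta^{\alpha}}(\cdot,t_{0})$ and $\sin_{\gamma\Delta^{\alpha}}(\cdot,t_{0})$ solve \eqref{eq19} via the derivative formulas, compute the Wronskian as $\gamma\,E_{i\gamma}(t,t_{0})E_{-i\gamma}(t,t_{0})\neq 0$ using the Pythagorean identity, and conclude by the fundamental-system theorem. Your version is in fact slightly more careful than the paper's (which contains a stray square on the $\cos^{2}+\sin^{2}$ term and writes $p$ for $\gamma$), since you explicitly justify the non-vanishing of the conformable exponentials from the regressivity condition.
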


\begin{proof}
One can easily show that $\cos_{\gamma\Delta^{\alpha}}(\cdot, t_0)$
and $\sin_{\gamma\Delta^{\alpha}}(\cdot, t_0)$ are solutions of \eqref{eq19}.
We prove that they form a fundamental set of solutions for \eqref{eq19}:
for $\gamma \neq 0$,
\begin{equation*}
\begin{split}
W&\left(\cos_{\gamma\Delta^{\alpha}}(t, t_{0}),
\sin_{\gamma \Delta^{\alpha}}(t, t_{0})\right)
= \det
\left(
\begin{array}{ccc}
\cos_{\gamma\Delta^{\alpha}}(t,t_{0})
& \sin_{\gamma\Delta^{\alpha}}(t,t_{0}))\\
-\gamma\sin_{\gamma\Delta^{\alpha}}(t,t_{0}))
& \gamma\cos_{\gamma\Delta^{\alpha}}(t,t_{0})\\
\end{array}
\right)\\
&=\gamma\left(\cos^{2}_{\gamma\Delta^{\alpha}}(t,t_{0})
+\sin^{2}_{\gamma\Delta^{\alpha}}(t,t_{0})\right)^{2}
=\gamma E_{ip}(t,t_{0})E_{-ip}(t,t_{0}) \neq 0
\end{split}
\end{equation*}
for all $t\in \mathbb{T}^{\kappa}_{[t_{0},\infty)}$.
We conclude that $y(t)=c_{1}\cos_{\gamma\Delta^{\alpha}}(t,t_{0})
+c_{2}\sin_{\gamma\Delta^{\alpha}}(t,t_{0})$,
$t \in \mathbb{T}_{[t_{0},\infty)}$, is a general solution of \eqref{eq19}.
\end{proof}

% -----------------------------------------

\par\bigskip\noindent
{\bf Acknowledgment.} This research is part of first author's Ph.D.,
which is carried out at Sidi Bel Abbes University, Algeria.
It was essentially finished while Bayour and Hamoudi were visiting the Department
of Mathematics of University of Aveiro, Portugal, 2016.
The hospitality of the host institution and the financial support
of Universities of Chlef and Ain-Temouchent, Algeria, 
are here gratefully acknowledged. Torres was supported 
by Portuguese funds through CIDMA and FCT, within project 
UID/MAT/04106/2013. The authors are very grateful to 
Salima Hassani and to an anonymous Referee, for several 
questions, remarks and suggestions.

% ------------------------------------------

% ------------------------------------------


\begin{thebibliography}{00}

\bibitem{[T.A]}
Abdeljawad, T.: 
On conformable fractional calculus,
{\em J. Comput. Appl. Math.} {\bf 279} (2015) 57--66.
{\tt arXiv:1402.6892}

\bibitem{[A.H]}
Abdeljawad, T., Al Horani, M. and Khalil, R.: 
Conformable fractional semigroups of operators,
{\em J. Semigroup Theory Appl.} {\bf 2015} (2015), Art. ID~7, 9~pp.
{\tt arXiv:1502.06014}

\bibitem{[D.D]}
Anderson, D. R. and Ulness, D. J.:
Newly defined conformable derivatives,
{\em Adv. Dyn. Syst. Appl.} {\bf 10} (2015), no.~2, 109--137.

\bibitem{MyID:330}
Bayour, B. and Torres, D. F. M.:
Complex-valued fractional derivatives on time scales.
In: {\em Differential and Difference Equations with Applications}
(Eds.: S. Pinelas, Z. Do\v sl\'a, O. Do\v sl\'y, P. E. Kloeden),
Springer Proceedings in Mathematics \& Statistics, Volume 164 (2016), 79--87.
{\tt arXiv:1511.02153}

\bibitem{MR3557866}
Bayour, B. and Torres, D. F. M.:
Existence of solution to a local fractional nonlinear differential equation,
{\em J. Comput. Appl. Math.} {\bf 312} (2017) 127--133.
{\tt arXiv:1601.02126}

\bibitem{[Ben.T]}
Benkhettou, N., Hassani, S. and Torres, D. F. M.:
A conformable fractional calculus on arbitrary time scales,
{\em J. King Saud Univ. Sci.} {\bf 28} (2016), no.~1, 93--98.
{\tt arXiv:1505.03134}

\bibitem{[B.P.1]}
Bohner, M. and Peterson, A.:
{\em Dynamic equations on time scales},
Birkh\"auser Boston, Boston, MA, 2001.

\bibitem{[B.P.2]}
Bohner, M. and Peterson, A.:
{\em Advances in dynamic equations on time scales},
Birkh\"auser Boston, Boston, MA, 2003.

\bibitem{MyID:351}
Lazo, M. J. and Torres, D. F. M.:
Variational calculus with conformable fractional derivatives,
{\em IEEE/CAA Journal of Automatica Sinica} {\bf 4} (2017), no.~2, 340--352.
{\tt arXiv:1606.07504}

\bibitem{MR3614829}
Nwaeze, E. R. and Torres, D. F. M.:
Chain rules and inequalities for the BHT fractional calculus on arbitrary timescales,
{\em Arab. J. Math. (Springer)} {\bf 6} (2017), no.~1, 13--20.
{\tt arXiv:1611.09049}

\bibitem{Zhao:Li}
Zhao, D. and Li, T.:
On conformable delta fractional calculus on time scales,
{\em J. Math. Computer Sci.} {\bf 16} (2016) 324--335.

\bibitem{[F.X]}
Zhao, D. and You, X.:
{\it A new fractional derivative on time scales},
{\em Advances in Applied Mathematical Analysis} {\bf 11} (2016), no.~1, 1--9.

\end{thebibliography}
\end{document}